\documentclass[11pt]{article}
\usepackage{amsfonts}
\usepackage{mathrsfs,color,amscd,amssymb,enumerate,amsthm,amsmath,bm,graphicx,psfrag,subfigure}
\usepackage{latexsym}
\usepackage{float,fancybox,shapepar,setspace,hyperref}
\usepackage{pgf,tikz}
\usetikzlibrary{shapes.geometric}

\makeatletter
\def\leftharpoonfill@{\arrowfill@\leftharpoonup\relbar\relbar}
\def\rightharpoonfill@{\arrowfill@\relbar\relbar\rightharpoonup}
\newcommand\rbjt{\mathpalette{\overarrow@\rightharpoonfill@}}
\newcommand\lbjt{\mathpalette{\overarrow@\leftharpoonfill@}}
\makeatother

\makeatletter

\renewcommand{\@seccntformat}[1]{{\csname the#1\endcsname}{\normalsize .}\hspace{.5em}}
\makeatother

\def \ss {\subseteq}

\def\bqed{ \hfill $\blacksquare$}

\newtheorem{thm}{Theorem}

\newtheorem{claim}{Claim}

\newtheorem{lem}{Lemma}

\newtheorem*{claim*}{Claim}

\theoremstyle{definition}

\newtheorem*{qu*}{Question}

\usetikzlibrary{arrows}
\usetikzlibrary{calc}
\begin{document}

\title{New results on proper orientation number of graphs}
\author{Lanchao Wang\thanks{School of Mathematics, Nanjing University, Nanjing, China, and ECOPRO, Institute for Basic Science, 55 Expo-ro, Yuseong-gu, Daejeon, 44126, Korea (Email: lanchaowang@foxmail.com)}~, Xiaolin Wang\thanks{ Corresponding author. School of Mathematics and Statistics, Fuzhou University, Fuzhou
350108, P.R. China (Email: xiaolinw@fzu.edu.cn)}~, Guangmiao Yu\thanks{School of Mathematics and Statistics, Fuzhou University, Fuzhou
350108, P.R. China (Email: guangmiaoyu@163.com)}}

\date{}
\maketitle

\begin{abstract}
The proper orientation number $\vec{\chi}(G)$ of an undirected graph $G$ is the 
minimum $k$ such that there exists an orientation of $G$ with all out-degrees at most $k$ and with different out-degrees for any two adjacent vertices. 
Chen, Mohar and Wu (JCTB, 2023)  proved that 
if $G$ is a $r$-partite graph, then
$\vec{\chi}(G) \leq \frac{1}{2} \text{Mad}(G)+r^{1+o(1)}$, where $\text{Mad}(G)$ is  the maximum average degree of $G$. Moreover, if $G$ is a bipartite graph, then $ \vec{\chi}(G) \leq \lceil \frac{1}{2} \text{Mad}(G)\rceil +3$ and this bound is tight.  They also asked whether $\vec{\chi}(G)-\lceil \frac{1}{2} \text{Mad}(G)\rceil$ can be bounded by  a linear function of $r$.  

In this paper, we first  construct  somewhat involved $r$-partite graphs with $\vec{\chi}(G)\geq\lceil \frac{1}{2} \text{Mad}(G)\rceil +\lfloor\frac{5}{2}r\rfloor-2$, showing that a linear dependence on \(r\) is unavoidable. We also prove that   $ \vec{\chi}(G) \leq\lceil \frac{1}{2} \text{Mad}(G)\rceil +7$ for every 3-partite graph $G$. This implies \(\vec{\chi}(G)\le 10\) for \(3\)-colorable planar graphs and
\(\vec{\chi}(G)\le 9\) for outerplanar graphs, improving the corresponding bounds
of Chen, Mohar, and Wu.

\vskip 2mm

\noindent{\bf Keywords}: Proper orientation, $r$-partite graph, Planar graph
\end{abstract}
\setcounter{section}{0}

\section{Introduction}\setcounter{equation}{0}

\vskip 2mm

 Let $G$ be a simple undirected connected graph. 
A \emph{proper orientation} $D$ of $G$ is an orientation of $G$ such that  $d_D^+(u)\not=d_D^+(v)$ for any edge $uv\in E(G)$.
Denote the \emph{proper orientation number} $\vec{\chi}(G)$ of  $G$ as the
minimum $\Delta^+(D)$ among all proper orientations $D$ of $G$, where $\Delta^+(D)$ is the maximum out-degree of $D$.

The study of proper orientations was motivated by the 1-2-3 Conjecture of Karoński,  Łuczak and  Thomason \cite{KT} (confirmed by Keusch in 2024 \cite{K}), which states that for every graph without isolated edges, its edges can be assigned weights from $\{1,2,3\}$ so that any two adjacent vertices receive different sums of incident edge weights.   
The proper orientation number can be viewed as the directed version of the 1-2-3 Conjecture: if  $uv$ is oriented as $(u,v)$, then $uv$ is assigned weight 1, but only $u$ can receive this weight.

In 2013,
Ahadi and Dehghan \cite{AD} initiated the systematic study of the proper orientation number. 
From the viewpoint of computational complexity, deciding whether $\vec{\chi}(G)\leq k$ for a graph $G$ and a positive integer $k$ is NP-complete, even for the following special classes of graphs: the line graphs of  regular graphs \cite{AD}, the planar bipartite graphs with maximum degree five \cite{ACD} and the planar subcubic graphs  \cite{ACD}. These results show that any insight into  the study of proper orientation number is  interesting.

A proper  $k$-coloring of $G$ is a function $c: V(G)\rightarrow \{1,...,k\}$ such that for any two adjacent vertices $u,v\in V(G)$, $c(u)\neq c(v)$. The smallest integer $k$ such that $G$ has a proper  $k$-coloring is the \emph{chromatic number} of $G$,  and is denoted by $\chi(G)$. Obviously, every proper orientation of $G$ induces a proper coloring on $V(G)$. This implies that $\chi(G)\leq \vec{\chi}(G)+1$.
Hence, it is   natural
to ask how $\vec{\chi}(G)$ behaves on graphs with bounded $\chi(G)$, or on
$r$-partite graphs.

It is easy to see that $ \vec{\chi}(G)\leq \Delta (G)$, where $\Delta (G)$ is the maximum degree of $G$. In 2015, 
Araujo, Cohen, de Rezende, Havet and Moura \cite{ACD} proved that the proper orientation number of bipartite graphs satisfies $\vec{\chi}(G)\leq \ \lfloor \frac{1}{2}(\Delta(G)+\sqrt{\Delta(G)}) \rfloor +1$. They asked whether there exists a constant $C$ such that $\vec{\chi}(G)\leq \  \frac{1}{2}\Delta(G)+C$ for every bipartite graph.

A breakthrough came from Chen, Mohar and Wu \cite{CM} in 2023.
They proved an essentially best possible general upper bound.
The \emph{maximum average degree} of a graph $G$, denoted by $\text{Mad}(G)$, is the largest average degree over all subgraphs of $G$:

 $$\text{Mad}(G)=\max\limits_{H\ss G} \frac{2|E(H)|}{|V(H)|}.$$

Observe that for any orientation $D$ of $G$,
\(\Delta^+(D)\geq \frac{1}{2}\text{Mad}(G)\), since every subgraph \(H\subseteq G\) satisfies 
\(|E(H)|\le \Delta^+(D)|V(H)|\). Hence, $\vec{\chi}(G)\geq \lceil\frac{1}{2}\text{Mad}(G)\rceil$. And this bound is sharp, as shown by  $K_{{2k-1},{2k-1}}$. 
Chen, Mohar and Wu \cite{CM} proved the following theorem.

\begin{thm}\label{CMW1} (Chen, Mohar and Wu \cite{CM})
If $G$ is a bipartite graph, then  $\vec{\chi}(G) \leq \lceil \frac{1}{2} \text{Mad}(G)\rceil +3$, and this bound is tight.
\end{thm}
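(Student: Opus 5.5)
Set $k=\lceil \frac{1}{2}\text{Mad}(G)\rceil$ and let $V(G)=A\cup B$ be the bipartition. The plan is to start from an orientation with all out-degrees at most $k$, reorient it so that out-degree parities (or residues) separate the two sides, and then repair the few remaining conflicts with at most three extra units of out-degree.

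\emph{Step 1 (base orientation).} By Hakimi's theorem, $\text{Mad}(G)\le 2k$ is equivalent to $G$ having an orientation $D_0$ with $\Delta^+(D_0)\le k$. Hakimi's flow formulation gives more: $G$ has an orientation with prescribed out-degrees $f(v)$ exactly when $\sum_v f(v)=|E|$ and $|E(G[S])|\le \sum_{v\in S} f(v)$ for every $S$. The second condition holds automatically when $f\ge k$ on most vertices, because every $S$ spans at most $k|S|$ edges. So I will look for target functions $f$ with values in $\{0,\dots,k+3\}$ whose values on $A$ and on $B$ can never coincide across an edge.

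\emph{Step 2 (separating the sides).} The natural rule is that out-degrees on $A$ lie in one residue class modulo $2$ (or $4$) and out-degrees on $B$ lie in another. Concretely:
\begin{itemize}
\item Start from $D_0$.
\item Repeatedly reverse directed paths in the auxiliary digraph to shift out-degrees, as in the Frank--Gyárfás style of orientation with degree constraints. Each reversal moves one unit of out-degree from the start of the path to its end and leaves interior vertices unchanged.
\item Choose the shifts so that every $a\in A$ ends with $d^+(a)\equiv 0$ and every $b\in B$ ends with $d^+(b)\equiv 1 \pmod 2$.
\end{itemize}
The global obstruction is parity: $\sum d^+=|E|$ is fixed. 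Adding slack lets each vertex round its out-degree up to the correct parity, at a cost of $+1$. A leftover imbalance can be absorbed by a single vertex, or, more robustly, by choosing the target parities per component of $G$. Verifying that this target is feasible reduces to checking Hakimi's condition for sets $S$, which holds because $k+1\ge k$ slack is available. Low-degree vertices, where $d(v)<f(v)$, are handled by capping $f(v)$ at $d(v)$, which preserves the required parity once we allow targets of $d(v)$ or $d(v)-1$.

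\emph{Step 3 (main obstacle).} Step 2 naively yields about $k+1$ or $k+2$, but it breaks down at vertices whose degree is small or whose forced out-degree is pinned, since these cannot take the prescribed residue. For these I plan to split the vertices of each side into ``big'' and ``small'' and use residues modulo $4$, which is where the constant $3$ comes from:
\begin{itemize}
\item $A$-vertices take out-degree $\equiv 0,1 \pmod 4$ and $B$-vertices take $\equiv 2,3 \pmod 4$.
\item Alternatively, give sinks (out-degree $0$) only to one side.
\end{itemize}
The hard part is showing that Hakimi's cut condition still holds when each target is rounded up by at most $3$ within the range $[0,k+3]$. For each $S$ this requires $|E(G[S])|\le\sum_{S} f$, and it fails only if many vertices of $S$ have $f(v)<k$. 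Those are exactly the low-degree vertices, whose edges are counted from the other endpoint. I would first attempt a direct counting argument of this kind, and fall back on an iterative augmenting-path argument if the counting fails.

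\emph{Step 4 (tightness).} The paper cites tightness from Chen, Mohar and Wu \cite{CM}. I would reproduce their example: a bipartite graph built from copies of $K_{n,n}$ with pendant gadgets, where forcing $\Delta^+\le k+2$ produces a conflict by a pigeonhole argument on out-degree values.
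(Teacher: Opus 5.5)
There is a genuine gap, and it lies in the mechanism itself, not only in the unfinished Step~3. A global residue rule separating $A$ from $B$ is in general \emph{infeasible} within $[0,k+3]$, so neither a counting argument nor augmenting paths can complete Steps~2--3.

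Here is a counterexample. Take an edge $ab$ and attach $100$ pendant vertices to each of $a$ and $b$. This is a tree, so $k=1$ and the target is $\Delta^+\le 4$. Moreover $|E|=201$ and $|A|=|B|=101$, so the global parity constraint $\sum_v d^+(v)=|E|$ is compatible with both choices of which side is even; the obstruction is local.
\begin{itemize}
\item If $A$ is the even side, the $100$ pendant neighbours of $b$ lie in $A$ and have degree $1$. They must therefore be sinks, which forces $d^+(b)\ge 100$.
\item If $B$ is the even side, symmetrically $d^+(a)\ge 100$.
\end{itemize}
A bounded number of ``exceptional'' vertices does not change this. The mod-$4$ variant is worse: a degree-one vertex on the side assigned residues $\{2,3\}$ has no admissible out-degree at all, and graphs such as $P_4$ have degree-one vertices on both sides. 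The one-line alternative of allowing sinks on only one side separates no other values.

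The underlying errors are already in Step~2:
\begin{itemize}
\item With exact targets, Hakimi requires $\sum_v f(v)=|E|\le k|V|$. So you cannot ``round every vertex up'': raising some targets forces lowering others, and leaves and other low-degree vertices cannot absorb this.
\item The inequality $|E(G[S])|\le\sum_{v\in S}f(v)$ is not implied by $f\ge k$ on ``most'' vertices. It needs $f\ge k$ on all of $S$, and the sum constraint makes many targets smaller than $k$.
\item Parity restrictions are not cut conditions, so feasibility of a parity-constrained target does not reduce to Hakimi's criterion.
\end{itemize}

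The missing idea is that only the vertices receiving the few top values need coordination; everything else can be handled greedily. This is the approach of \cite{CM}. The paper does not reprove the upper bound, but it adapts this approach in Section~3:
\begin{enumerate}
\item Start from $D_0$ with $\Delta^+(D_0)\le k$ (Lemma~\ref{l1}) and track the potential out-degree $d_p$.
\item For each top level $k+i$ in turn, choose a suitably extremal independent set of vertices with $d_p\ge k+i$. Give each of them out-degree exactly $k+i$ by keeping its $D_0$-out-arcs and orienting further edges outward.
\item Use Ore's weighted matching Lemma~\ref{l2} to push the potential of the leftover vertices on the other side below $k+i$.
\item Orient the rest by repeatedly taking an unoriented vertex of maximum $d_p$ and orienting all its remaining edges outward.
\end{enumerate}
The last step is automatically proper, because each still-unoriented neighbour loses a unit of potential to that vertex and so ends strictly lower.

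Your tightness sketch is also too vague to check. The essential ingredient, reproduced in Lemma~\ref{lemma:low-forcer}, is a forcer that excludes the values $1,\ldots,k+1$ on a $k$-set $A$ while keeping $\text{Mad}$ at $2k$. It uses $kM+1$ vertices joined to each nonempty $S\subseteq A$, plus the $C_j,d_j$ gadget. Degree-one pendants cannot play this role, since each can only exclude the value $0$ or $1$ at its neighbour.
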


As a corollary, $\vec{\chi}(G) \leq \lceil \frac{1}{2} \Delta(G)\rceil +3$, answering the problem of Araujo, Cohen, de Rezende, Havet and Moura \cite{ACD}.
Chen, Mohar and Wu \cite{CM} also proved a result for 
$r$-partite graphs.

\begin{thm}\label{CMW2} (Chen, Mohar and Wu \cite{CM})
    If $G$ is a $r$-partite graph, then $\vec{\chi}(G) \leq \frac{1}{2} \text{Mad}(G)+ O(\frac{r\log{r}}{\log{\log{r}}})$. 
\end{thm}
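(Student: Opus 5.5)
We reduce the $r$-partite case to Theorem~\ref{CMW1}-type machinery applied recursively, combined with a reservation of ``out-degree windows'' for each colour class. Let $V_1,\dots,V_r$ be the parts and set $k=\lceil \frac12\text{Mad}(G)\rceil$. By Hakimi's theorem (equivalently, a flow/Hall argument), $G$ has an orientation with all out-degrees at most $k$; more flexibly, for any prescribed target function $t:V\to\mathbb{Z}_{\ge 0}$ with $\sum_{v\in S} t(v)\ge |E(G[S])|$ for all $S$, there is an orientation with $d^+(v)\le t(v)$. The plan is to choose targets so that each class $V_i$ is forced into a residue class of out-degrees modulo some modulus $m$. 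Then edges between different classes are automatically proper, and edges inside a class do not exist.

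First I fix $m\approx r$ (more precisely, a modulus chosen so that $O(r\log r/\log\log r)$ slack suffices, as explained below) and assign to class $V_i$ a residue $a_i \bmod m$. Starting from an orientation $D_0$ with $\Delta^+(D_0)\le k$, I want to modify it so that every $v\in V_i$ has $d^+(v)\equiv a_i \pmod m$, increasing out-degrees by at most $O(m)$. To do this I would run a sequence of flow corrections. Vertices whose out-degree is too small ``pull'' edges by reversing directed paths from vertices with surplus. Since any vertex can absorb extra out-degree up to $k+m$, a Hakimi-type feasibility check shows that the targets $t(v)\in[k,k+m)$ with the right residue are feasible: for every $S$, $\sum_{v\in S} t(v)\ge k|S|\ge |E(G[S])|$. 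This gives upper bounds but not exact values. The main point is therefore to turn upper bounds into exact degrees. I would use the following trick. Pick an orientation minimizing $\sum_v \max(0,d^+(v)-t(v))$ subject to each $d^+(v)$ lying in an allowed set $L(v)$, where $L(v)$ consists of the values in $[0,k+m]$ congruent to $a_i$. Out-degrees near $0$ cause trouble, since low-degree vertices cannot reach a required residue. This is handled by letting $L(v)$ include all small values below some threshold, and guaranteeing properness there by a separate argument: vertices with small out-degree form a sparse subgraph that we properly handle greedily.

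The main obstacle is exactly this small-degree regime together with the exact-degree realization. Pure residues modulo $r$ would give $k+r$, but low-degree vertices in different classes may collide. My first attempt would be to replace fixed residues by a random assignment of residues per vertex class, drawn from a set of size $s=\Theta(r\log r/\log\log r)$. By a Lovász Local Lemma or union-bound argument over the classes, the maximum load of the balls-into-bins type collision is $O(\log r/\log\log r)$ classes per residue. Classes sharing a residue are then separated by a bipartite-style correction in the spirit of the proof of Theorem~\ref{CMW1}, costing $O(1)$ per colliding class. Summing these gives an additive error $O(r\log r/\log\log r)$ above $\frac12\text{Mad}(G)$, which is the bound claimed in Theorem~\ref{CMW2}.
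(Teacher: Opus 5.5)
\textbf{The paper does not prove this statement.} It quotes it from Chen, Mohar and Wu, and only records that their proof uses potential out-degrees, fractional orientations and the weighted matching Lemma~\ref{l2}.

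\textbf{The central step is missing.} Judged on its own, your proposal has a genuine gap: nothing in it produces an orientation whose out-degrees lie in prescribed residue classes. Hakimi's theorem and its flow-based refinements control out-degrees only through interval constraints $\ell(v)\le d^+(v)\le u(v)$. A constraint $d^+(v)\equiv a_i \pmod{m}$ has gaps, and your path-reversal corrections do not respect it. Reversing a directed path changes the out-degree of each endpoint by exactly one, so it moves both endpoints off their residue classes. You give no invariant or potential showing that some sequence of reversals ends with every vertex in its class.

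In general such orientations do not exist. Summing out-degrees gives the necessary condition $\sum_i a_i|V_i|\equiv |E(G)|\pmod{m}$, and a vertex of degree less than $a_i$ cannot reach its residue at all. Taking ``an orientation minimizing $\sum_v\max(0,d^+(v)-t(v))$ subject to $d^+(v)\in L(v)$'' is circular. The existence of an orientation respecting the lists $L(v)$ is exactly what must be shown, and nothing shows the minimum is $0$.

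Section~2 of the paper shows this step is not a technicality. Take the graph of Lemma~\ref{lemma:five-halves-construction}. An orientation giving its $r$ classes distinct residues modulo $m$, with all out-degrees in $[0,k+m]$, would be proper with maximum out-degree at most $k+m$. So it cannot exist for any $m\le\lfloor 5r/2\rfloor-3$, in particular not for your ``pure residues modulo $r$''.

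\textbf{The patches do not close this gap.}
\begin{itemize}
\item \emph{Small-degree regime.} Allowing all small values in $L(v)$ and treating low vertices ``greedily'' moves the whole difficulty there. When $k$ is small compared with $m$, at least half of the vertices have degree at most $4k<m$. For them the residue class meets $[0,d(v)]$ in at most one value, so they all fall into the small regime. The subgraph they induce is in general no sparser than $G$. Properly orienting it below the threshold, with the edges to the other vertices already fixed, is the original problem again. Moreover, reorienting any edge joining a low vertex to a high one shifts the high vertex off its residue.
\item \emph{Randomization.} With $s\ge r$ residues available you can simply give the classes distinct residues deterministically, so there are no collisions to control. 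The choice $s=\Theta(r\log r/\log\log r)$ is read off from the target bound rather than forced by the argument.
\item \emph{Bipartite correction.} Theorem~\ref{CMW1} is a global existence statement about bipartite graphs. It is not a local operation that separates two classes sharing a residue while leaving the out-degrees of all other vertices unchanged, so the ``bipartite-style correction'' is undefined.
\end{itemize}

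\textbf{What is missing} is a way to impose exact out-degrees only where it is cheap, as in the paper's proof of Theorem~\ref{t1}. There the value $k+i$ is imposed only on an independent set $A_{k+i}$ of vertices whose potential out-degree is at least $k+i$. Each such vertex has at most $k+2$ out-edges already, so it can be completed to out-degree exactly $k+i$ using its own unoriented edges, without affecting the other vertices of $A_{k+i}$. Lemma~\ref{l2} is used to make the potentials of the remaining vertices drop below $k+i$. The rest is finished greedily in order of decreasing potential.
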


The proofs of Theorems \ref{CMW1} and \ref{CMW2} are based on two novel notions of potential out-degree and fractional orientations, and a clever use of weighted  matching Lemma \ref{l2}.  

It is also interesting to investigate the tight upper bound of proper orientation number for 
 sparse  graphs. For example, earlier studies   \cite{AG}, \cite{AC},  \cite{ACD},  \cite{AH}, \cite{KG} and  \cite{KN} focused on chordal graphs, trees, outerplanar and bipartite planar graphs. These works established that the proper orientation number can be  bounded, but with  some supplementary structural constraints:  triangle-free, bipartite, or  3-connected. 
As a corollary of Theorem \ref{CMW1}, if $G$ is a bipartite planar graph, then $\vec{\chi}(G)\leq 5$, and this bound is tight by the same extremal graph. Chen, Mohar and Wu \cite{CM} also  use the same methods developed for Theorems \ref{CMW1} and \ref{CMW2} to prove the following results.

\begin{thm}
    \label{CMW3} (Chen, Mohar and Wu \cite{CM})
If $G$ is a planar graph, then $\vec{\chi}(G)\leq 14$. Moreover, if $G$ is 3-colorable, then $\vec{\chi}(G)\leq 11$; if $G$ is outerplanar, then $\vec{\chi}(G)\leq 10$.
\end{thm}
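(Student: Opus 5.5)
Theorem \ref{CMW3} is quoted from Chen, Mohar and Wu, so what is needed is a reconstruction of their argument: reduce each of the three claims to a general bound of the form $\vec{\chi}(G)\le \lceil\frac12\text{Mad}(G)\rceil + c_r$ for $r$-partite graphs, and then insert the Mad bounds coming from Euler's formula. For a planar graph $H$ on $n\ge 3$ vertices, $|E(H)|\le 3n-6$, so $\text{Mad}(G)<6$ and $\lceil\frac12\text{Mad}(G)\rceil\le 3$. For an outerplanar graph $|E(H)|\le 2n-3$, so $\text{Mad}(G)<4$ and $\lceil\frac12\text{Mad}(G)\rceil\le 2$. Outerplanar graphs are $3$-colorable and planar graphs are $4$-colorable by the Four Colour Theorem, so the relevant values of $r$ are $3$ and $4$. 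Thus the three bounds follow from additive constants $c_3\le 8$ (giving $11$ for $3$-colorable planar and $10$ for outerplanar graphs) and $c_4\le 11$ (giving $14$ for planar graphs).

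The core step is to make Theorem \ref{CMW2} quantitative for small $r$, using the potential out-degree and fractional orientation machinery together with the weighted matching Lemma \ref{l2}. Fix an $r$-coloring $V_1,\dots,V_r$ and set $k=\lceil\frac12\text{Mad}(G)\rceil$. By Hakimi's theorem (an orientation version of Nash-Williams), $G$ has an orientation with all out-degrees at most $k$. The plan is to assign each class $V_i$ a disjoint window of target out-degrees. For bipartite graphs, Theorem \ref{CMW1} achieves this with an overhead of $3$: one class receives out-degrees of one parity and the other class the other parity, or the two classes are separated into windows. For $r$ classes I would give class $V_i$ a residue or window shifted by an offset $s_i$. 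For each vertex $v\in V_i$ I would then adjust its out-degree up or down by a bounded amount so that it lands in its prescribed set. The adjustment is done by reversing edges chosen through a matching or flow argument: Lemma \ref{l2} selects, for each vertex, which incident edges to flip, while controlling the total weight so that the out-degree of no vertex ends up outside its window. Summing the window widths over the $r$ classes gives the constant $c_r$, and one then checks that $c_3\le 8$ and $c_4\le 11$.

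A cleaner route for the planar cases uses degeneracy. Planar graphs are $5$-degenerate and outerplanar graphs are $2$-degenerate, and adding the vertices of a degeneracy order one at a time allows a greedy proper orientation. However, greedy insertion can create conflicts with earlier vertices, so it has to be combined with the fractional approach above. I therefore expect the main obstacle to be the quantitative bookkeeping in Lemma \ref{l2}: keeping the constants small enough (such as $8$ for $r=3$) while guaranteeing that the flipped edges never push an already fixed neighbour into a colliding out-degree. This is exactly where Chen, Mohar and Wu's potential-function argument is needed. My first attempt would be to process the classes in a fixed order, with each later class's window placed strictly above the windows of all earlier classes, so that adjacent vertices in different classes automatically have different out-degrees.
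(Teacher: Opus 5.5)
Your reduction is correct and is exactly how the paper derives Theorem~\ref{planar} from Theorem~\ref{t1}. Since $\text{Mad}(G)<6$ for planar graphs and $\text{Mad}(G)<4$ for outerplanar graphs, the three bounds follow from additive constants $c_3\le 8$ and $c_4\le 11$. The paper itself only quotes Theorem~\ref{CMW3}. Its Theorem~\ref{t1} gives $c_3\le 7$, which yields the stronger bounds $10$ and $9$ for the last two claims; the planar bound $14$ is not reproved.

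The genuine gap is that the substance of the statement, namely $c_3\le 8$ and $c_4\le 11$, is left as ``one then checks'', and the mechanism you sketch cannot deliver it. Prescribed out-degree sets cannot be attached to colour classes, because $d^+(v)\le d(v)$. If the windows are stacked, $W_1<W_2<\cdots<W_r$, then $\min W_r\ge r-1$. A leaf lying in $V_r$ (nothing in a fixed colouring prevents this) can then never reach its window. A residue-per-class scheme fails the same way, since a leaf only has out-degrees $0$ and $1$ available. ``Adjusting by a bounded amount via Lemma~\ref{l2}'' does not repair this. You also state no invariant preventing the flips from creating new conflicts, and that invariant is where the whole difficulty lies.

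What works, and is the structure of the proof of Theorem~\ref{t1}, is to attach the large values $k+2,\dots,k+7$ to \emph{levels}, not classes, and let every other vertex end with out-degree at most $k+1$.
\begin{itemize}
\item Start from a Hakimi orientation $D_0$ with all out-degrees at most $k$.
\item For $i=7,6,\dots,2$, choose an independent set $A_{k+i}$ among unplaced vertices with potential out-degree $d_p\ge k+i$. Orient so that each vertex of $A_{k+i}$ gets out-degree exactly $k+i$.
\item Throughout, keep every unplaced vertex with only $D_0$-out-edges, so $d_p^+\le k$. This is what makes topping up to exactly $k+i$ possible.
\item The colour classes enter only as priorities in choosing $A_{k+i}$. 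Maximality forces leftover vertices of a designated class to have enough neighbours in $A_{k+i}$ that their potential out-degree drops once those edges are oriented out of $A_{k+i}$.
\item Lemma~\ref{l2} then supplies one extra in-edge from $A_{k+i}$ to each leftover vertex of a further designated class that still has $d_p=k+i$. It uses weights $d_p(v)-k-4$ at level $k+4$ and unit weights at levels $k+3$ and $k+2$.
\item After level $k+2$ every unplaced vertex has $d_p\le k+1$. Orienting these greedily in order of decreasing $d_p$ is automatically proper and stays below all levels.
\end{itemize}
The constant is governed by how many levels are needed to push every class below $k+2$: six levels, $k+2,\dots,k+7$, for $r=3$. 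For the bound $14$ you would need such an argument for $r=4$ with constant $11$, and your proposal does not supply one.
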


These bounds in Theorem \ref{CMW3} may  not be tight, as 
a planar graph with $\vec{\chi}(G)=10$ and an outerplanar graph with $\vec{\chi}(G)=7$ were constructed by 
Araujo,  Havet,  Linhares Sales and  Silva \cite{AH}.

In this paper, our first main result improves the bound of Chen, Mohar and Wu in Theorem~\ref{CMW2} for
3-partite graphs, using the notion of 
potential out-degree and weighted  matching Lemma \ref{l2}, but without using  fractional orientations.

\begin{thm}
    \label{t1} For every 3-partite graph $G$,  $\vec{\chi}(G)\leq \left\lceil \frac{\text{Mad}(G)}{2} \right\rceil+7$.
    
\end{thm}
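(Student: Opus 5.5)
Write $k=\lceil \frac{1}{2}\text{Mad}(G)\rceil$ and let $V(G)=V_1\cup V_2\cup V_3$ be the tripartition. The plan follows the Chen--Mohar--Wu framework of potential out-degrees, but uses only integral orientations.

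\textbf{Step 1 (base orientation).} By Hakimi's theorem there is an orientation $D_0$ of $G$ with $\Delta^+(D_0)\le k$, since every subgraph $H$ has $|E(H)|\le k|V(H)|$.

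\textbf{Step 2 (target out-degrees).} The aim is to force out-degrees on different parts into disjoint residue classes. For each vertex $v$ we define a target out-degree $t(v)\in\{0,\dots,k+7\}$ with $t(v)\equiv i \pmod 3$ for $v\in V_i$; vertices with small degree may instead be assigned out-degrees in a low range. Any orientation realizing $d^+(v)=t(v)$ is then automatically proper.

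\textbf{Step 3 (realizing the targets).} We modify $D_0$ by reversing arcs. Each vertex needs to shift its out-degree by a bounded amount: either raise it from $d^+_{D_0}(v)$ up to the next admissible value, at most $+2$, or lower it, at most $-2$, or something similar. The slack of $7$ covers three sources of loss:
\begin{itemize}
\item the residue rounding, costing $2$;
\item the parity/potential adjustments, costing about $3$;
\item an error term coming from the matching step, costing about $2$.
\end{itemize}
Concretely, we choose for each vertex a \emph{potential out-degree} interval of length $3$ inside $[d^+_{D_0}(v)-c,\,d^+_{D_0}(v)+c']$ that contains a number of its residue class. We then apply the weighted matching Lemma~\ref{l2} to the bipartite incidence structure between vertices and edges, in the style of a flow/Hall argument, to find a reorientation in which every vertex's out-degree lies in its interval at the correct residue. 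The Hall-type condition required by Lemma~\ref{l2} should follow from the Mad bound: every subgraph $H$ has $|E(H)|\le k|V(H)|$, and it retains flexibility since each vertex accepts three consecutive values.

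\textbf{Step 4 (conflict cleanup).} If residues alone do not separate everything, for instance for low-degree vertices whose out-degree is forced to be $d(v)$ or $0$, we handle them separately. Vertices of degree at most $7$ are oriented with out-degree less than $8$ only in residue-compatible ways, or are removed and reinserted greedily.

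\textbf{Main obstacle.} The hardest part is Step 3: verifying the deficiency condition of Lemma~\ref{l2} when three residue classes interact. Around a vertex, an edge between $V_1$ and $V_2$ influences both endpoints' residues simultaneously, so the bipartite argument of Theorem~\ref{CMW1} does not transfer directly. My first attempt would be to split into the three bipartite subgraphs $G[V_i\cup V_j]$. In each, I would apply the bipartite potential argument with only the $\pm$ parity freedom, and then combine the three by reserving about $2$ units of slack per pair. If that overcounts, I would instead process $V_3$ last, treating its edges as free arcs that repair the residues of $V_1\cup V_2$.
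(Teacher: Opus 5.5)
\textbf{There is a genuine gap, and it lies in Steps 2--3 themselves, not in checking a Hall condition.} An orientation with $d^+(v)\equiv i \pmod 3$ for every $v\in V_i$ need not exist, and the bound $\text{Mad}(G)\le 2k$ says nothing about congruences.

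Summing over all vertices forces $|E(G)|\equiv |V_1|+2|V_2| \pmod 3$. Take $G=K_{n,n,n}$ minus one edge, which has three parts of size $n$ and $k=n$. Any assignment of the three residues to the three parts gives $\sum_v d^+(v)\equiv 3n\equiv 0$, whereas $|E(G)|=3n^2-1\equiv 2 \pmod 3$. Every degree is at least $2n-1$, so for large $n$ none of your low-degree exceptions in Step 4 applies.

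The same obstruction also appears locally at small edge cuts. If $S$ is joined to the rest of $G$ by a single edge, then $\sum_{v\in S}d^+(v)\in\{|E(G[S])|,|E(G[S])|+1\}$, which misses one residue class. Prescribing out-degrees modulo $3$ is a $\mathbb{Z}_3$-connectivity type question, controlled by edge cuts rather than by density.

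Your Step 3 also has no real flexibility. Three consecutive integers contain exactly one integer of each residue class, so ``an interval of length $3$ at the correct residue'' pins $d^+(v)$ to a single value $t(v)$. Hall or flow arguments such as Lemma~\ref{l2} handle interval constraints $l(v)\le d^+(v)\le u(v)$. Here those constraints degenerate to exact ones: $\sum_v t(v)=|E(G)|$ and $\sum_{v\in S}t(v)\ge |E(G[S])|$ for all $S$. You have no means of arranging these, so the slack accounting ($2+3+2$) is not attached to any actual argument.

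Finally, once some vertices are allowed to leave their residue class, properness is no longer automatic. The proposal has no mechanism for resolving the resulting conflicts, and that is where the real difficulty of the theorem lies.

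\textbf{The paper's route does not use residues at all.} It keeps a Hakimi orientation $D_0$ with $\Delta^+(D_0)\le k$ and tracks the potential out-degree $d_p(v)$, the number of out-edges plus still-unoriented edges at $v$.

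It then builds independent sets $A_{k+7},A_{k+6},\ldots,A_{k+2}$ in turn. Each vertex of $A_{k+i}$ first takes its $D_0$ out-arcs and is then completed to out-degree exactly $k+i$. Consequently every arc into a finished vertex is a $D_0$-arc, and every unfinished vertex keeps $d_p^+\le k$.

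At levels $7,6,5$, $A_{k+i}$ is chosen to contain every eligible vertex of the independent part $V_{8-i}$, which lowers $d_p$ on that whole part. At levels $4,3,2$, vertices left out of $A_{k+i}$ lose potential out-degree through arcs from $A_{k+i}$. This loss is guaranteed either by the extremal choice of $A_{k+i}$ or by a (weighted) matching from Lemma~\ref{l2}. After level $2$, every unfinished vertex has $d_p\le k+1$.

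The remaining edges are then oriented greedily: repeatedly orient all remaining edges out of an unfinished vertex of maximum $d_p$. Out-degrees then strictly decrease along every remaining edge, while finished vertices have out-degree at least $k+2$. So the final orientation is proper with maximum out-degree at most $k+7$. The tripartition enters only in deciding which part is exhausted at each level.
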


Since every $3$-colorable planar graph satisfies $\text{Mad}(G) < 6$, and every outerplanar graph
satisfies $\text{Mad}(G) < 4$, Theorem \ref{t1} immediately yields the following improvement of
Theorem \ref{CMW3}.

\begin{thm}
    \label{planar} Let $G$ be a 3-colorable planar graph. Then  $\vec{\chi}(G)\leq 10$.
    Moreover, if $G$ is outerplanar, then $\vec{\chi}(G)\leq 9$.
\end{thm}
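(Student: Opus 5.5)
The statement is Theorem~\ref{planar}, a direct corollary of Theorem~\ref{t1}. I would prove it by bounding $\text{Mad}(G)$ for each class and substituting into that bound.

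For the first part, let $G$ be a $3$-colorable planar graph. A proper $3$-coloring partitions $V(G)$ into three independent sets, so $G$ is $3$-partite and Theorem~\ref{t1} applies. Next I bound $\text{Mad}(G)$. Every subgraph $H$ of $G$ is planar. If $|V(H)|\ge 3$, Euler's formula gives $|E(H)|\le 3|V(H)|-6$, so $2|E(H)|/|V(H)|\le 6-12/|V(H)|<6$. If $|V(H)|\le 2$, then $|E(H)|\le 1$ and the average degree is at most $1<6$. Since the maximum over subgraphs is attained, $\text{Mad}(G)<6$. Hence $\lceil \text{Mad}(G)/2\rceil\le 3$, and Theorem~\ref{t1} yields $\vec{\chi}(G)\le 3+7=10$.

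For the second part, let $G$ be outerplanar. Outerplanar graphs are $2$-degenerate: every outerplanar graph has a vertex of degree at most $2$, and the class is closed under taking subgraphs. So $G$ is $3$-colorable, hence $3$-partite. Every subgraph $H$ of $G$ is outerplanar. If $|V(H)|\ge 2$, then $|E(H)|\le 2|V(H)|-3$, so $2|E(H)|/|V(H)|<4$; a single vertex has average degree $0$. Thus $\text{Mad}(G)<4$, so $\lceil\text{Mad}(G)/2\rceil\le 2$, and Theorem~\ref{t1} gives $\vec{\chi}(G)\le 9$.

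There is no real obstacle here. The only point needing care is using the strict inequalities $\text{Mad}(G)<6$ and $\text{Mad}(G)<4$, rather than $\le$, so that the ceilings come out as $3$ and $2$. The standard edge bounds are cited from planar graph theory and are not reproven.
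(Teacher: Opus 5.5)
Your proposal is correct and follows the paper's argument: the paper also derives this result directly from Theorem~\ref{t1}, using $\text{Mad}(G)<6$ for planar graphs and $\text{Mad}(G)<4$ for outerplanar graphs, so that the ceilings are $3$ and $2$. You also spell out the Euler-formula edge bounds and the fact that outerplanar graphs are $3$-colorable, both of which the paper leaves implicit.
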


Chen, Mohar and Wu \cite{CM}  also asked (Question 7.3) whether $\vec{\chi}(G)- \left\lceil \frac{\text{Mad}(G)}{2} \right\rceil$ can be bounded by  a linear function of $\chi(G)$. In this paper, we 
construct somewhat involved   $r$-partite graphs with $\vec{\chi}(G)\geq \left\lceil \frac{\text{Mad}(G)}{2} \right\rceil+\left\lfloor \frac{5}{2}r\right\rfloor-2$. Our construction shows that in any bound of the form
\(\vec{\chi}(G)\le \left\lceil \frac{\text{Mad}(G)}{2}\right\rceil+f(r),
\)
the term \(f(r)\) must be asymptotically at least \(5r/2\).

  In the rest of this paper, we first construct a class of $r$-partite graphs with $\vec{\chi}(G)\geq \left\lceil \frac{\text{Mad}(G)}{2} \right\rceil+\left\lfloor \frac{5}{2}r\right\rfloor-2$ in Section 2. In Section 3, we prove Theorem \ref{t1}. 
  
  At the end of this section, we  present some necessary notation. Let $[a, b]$ denote the set of all integers between $a$ and $b$. We say a graph is \emph{$k$-degenerate} if all its subgraphs have minimum degree at most $k$. Let $A$ be an independent set in $G$.   To   \emph{attach} a graph $F$ to $A$  means that we obtain a new graph $G\cup F$, while $V(G)\cap V(F)=A$.   For any two disjoint vertex sets $A,B\ss V(G)$,  $A\to B$ means that we orient all the edges in $E[A,B]$  from $A$ to $B$. If $A=\{a\}$ or $B=\{b\}$, we simply denote by $a\to B$ or $A\to b$. Denote the \emph{out-edge} (\emph{in-edge}) incident with $v$ as the edge oriented out of (into) $v$. We refer the reader to \cite{BM} for other definitions not included here.

\section{$r$-partite graphs $G$ with large $\vec{\chi}(G)$ ($r\geq 3$)}

\noindent{\bf Sketch.}
The construction in this section is quite involved.  To prove the lower bound,
we argue by contradiction and assume that there is a proper orientation
with maximum out-degree at most \(M\).  Under this assumption, we control
the possible out-degree values on a basic independent set \(A\).  The
construction has three main ingredients.

\smallskip
\noindent{\rm (1)}
We introduce forcers, whose role is to forbid prescribed out-degree
values on \(A\) or on suitable subsets of \(A\).   The bipartite forcer
comes from the construction of Chen, Mohar and Wu~\cite{CM}; we then
construct an \(r\)-partite interval forcer to forbid further consecutive
values. Although $\vec{\chi}(G)$ is not monotonic \cite{AH}, we can combine all the forcers
together to forbid a larger out-degree interval on $A$.

\smallskip
\noindent{\rm (2)}
We control \(\text{Mad}(G)\).  Since the new forcers are not
necessarily \(k\)-degenerate, we use Lemma~\ref{lemma:mad-charging} to
control the maximum average degree through an auxiliary orientation.

\smallskip
\noindent{\rm (3)}
We take \(r\) copies of the block and join the selected controlled sets
into a complete \(r\)-partite subgraph.  The forcers leave only a small
set of possible out-degree values on these selected vertices.  This
forces a rigid count of the total out-degree on the joined part, which
is too small to cover all its edges.  This gives the desired
contradiction.

\subsection{Forcers.}
 Let \(A\) be a fixed independent 
set in some graph $G$ and let \(I\) be a set of positive integers. A graph \(F\) with
 \(A\subseteq V(F)\) is called an \(I\)-forcer for \(A\) under the  upper bound \(M\)
if the following holds: when we attach \(F\)  to $A$,
every proper orientation  of \(G\cup F\) with
\(
        d^+(v)\leq M\)
        for every \(v\in V(G\cup F)
\)
must satisfy
\[
        d^+(a)\notin I
        \quad\text{for every }a\in A.
\]
In other words, an \(I\)-forcer does not prescribe the out-degrees of
vertices in \(A\); rather, it forbids the values in \(I\) on \(A\). This definition is inspired by the bipartite forcer constructed by Chen, Mohar and Wu~\cite{CM} in their proof of the tightness of Theorem~\ref{CMW1}. We include a proof here for completeness. 

\begin{lem}[{\cite{CM}}]\label{lemma:low-forcer}
Let \(A\) be an independent set of size \(k\) in some graph $G$, and let \(M\geq k+1\). There is a
bipartite graph \(F_1\) with bipartition
\(
        \{A\cup D,\ B\cup C\}
\)
such that \(F_1\) is a \([1,k+1]\)-forcer for \(A\) with
upper bound \(M\).
\end{lem}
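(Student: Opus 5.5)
We will build \(F_1\) by hand and then argue downward on the out-degree of \(a\in A\). The bipartition \(\{A\cup D,\ B\cup C\}\) suggests the following design. \(B\) is a layer of new vertices adjacent to \(A\). Each \(b\in B\) gets many private neighbours in \(D\). Each \(d\in D\) in turn gets \(M\) private neighbours in \(C\), and \(C\) consists of leaves.

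The role of \(C\) is to make the vertices of \(D\) "heavy": they are forced to have large out-degree. The role of \(B\) is to create vertices adjacent to all of \(A\) whose out-degree is pinned down. Concretely, I would take \(B=\{b_1,\dots,b_{k+1}\}\) (or several disjoint copies), with each \(b_j\) adjacent to every vertex of \(A\). Each \(b_j\) also gets a set \(D_j\subseteq D\) of many private neighbours. Each \(d\in D_j\) is attached to a set of fresh leaves in \(C\), so that \(d\) has total degree larger than \(M\) by a controlled amount.

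The key steps, in order, are as follows.

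\emph{Step 1.} Consider a vertex \(d\in D\) of degree \(M+1\) (for example \(M\) leaves plus its neighbour \(b_j\)). Since \(d^+(d)\le M\), at least one edge at \(d\) points into \(d\). Pendant leaves can always be oriented away from \(d\) without creating a conflict only if the leaf's out-degree (0) differs from \(d^+(d)\). I would instead give each \(d\) exactly enough leaves that \(d\) must send out almost everything.

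\emph{Step 2.} Use pigeonhole over the many vertices of \(D_j\). Many \(d\in D_j\) must receive the edge from \(b_j\), which forces \(d^+(b_j)\) to be large. Alternatively, choose the degree of \(b_j\) so that its out-degree is confined to a small set.

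\emph{Step 3.} Conclude that \(d^+(b_j)\) takes distinct prescribed values as \(j\) ranges. The cleanest target is that \(\{d^+(b_j)\}\) covers \([1,k+1]\) on vertices adjacent to all of \(A\). Then no \(a\in A\) can have out-degree in \([1,k+1]\), since properness forbids \(d^+(a)=d^+(b_j)\).

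An alternative, and probably the actual CM-style argument, works from the other side. Suppose \(d^+(a)=t\in[1,k+1]\). Then \(a\) has out-neighbours, and we want to derive a contradiction through counting. Here \(B\) is a set of many common neighbours of \(A\) whose degrees are exactly \(k+1\), and each \(b\in B\) is attached via \(D\) and \(C\) to gadgets that force \(d^+(b)=0\) or a specific value. The degree constraint \(M\ge k+1\) enters exactly there.

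\emph{Step 4.} Check that \(F_1\) is bipartite with the stated parts. Check also that attaching it only touches \(A\), so the argument is local and holds for every proper orientation of \(G\cup F_1\) with maximum out-degree \(M\), whatever \(G\) is.

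The main obstacle is Step 1–2: designing the leaf/pendant gadget so that out-degrees of the intermediate vertices are genuinely forced rather than merely constrained. The difficulty is that leaves in \(C\) have out-degree 0 or 1, and this interacts with properness. My first attempt would be the following. Give each \(d\in D\) exactly \(M+1\) neighbours, one of which is \(b\). Argue that \(d\) has in-degree at least 1, and that an in-neighbour leaf has out-degree 1, so \(d^+(d)\neq 1\). Then use many such \(d\) per \(b\), with pigeonhole, to show that \(b\) must point to at least one of them or receive from all. Finally, tune \(|N(b)|\) to pin \(d^+(b)\) to the value needed to block \([1,k+1]\) at \(A\).
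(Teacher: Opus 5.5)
\textbf{There is a genuine gap.} The proposal never fixes a construction (``or several disjoint copies'', ``many private neighbours'', ``alternatively\ldots''), and the mechanism you do sketch cannot work.

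\emph{The $D$/$C$ gadget forces nothing.} Let $d\in D_j$ be adjacent to $b_j$ and to $M$ leaves. Orient $d\to b_j$, orient $d$ towards $M-1$ of its leaves, and orient the remaining leaf into $d$. Then $d^+(d)=M$ and the leaves have out-degree $0$ or $1$. Since all of $D_j$ points into $b_j$, we get $d^+(b_j)\le k<M$. This is proper, so your Step 2 claim that many $d\in D_j$ must receive from $b_j$ is false. Pigeonhole actually forces the opposite direction: if $|D_j|>M$, at least $|D_j|-M$ vertices of $D_j$ point into $b_j$. Consequently $b_j$ behaves exactly like a vertex adjacent only to $A$.

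\emph{Vertices complete to $A$ cannot witness small values.} If $d^+(a)=i$ and every other vertex of $A$ has out-degree $0$, then such a vertex receives at most one edge from $A$, so its out-degree is at least $k-1$.

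\emph{A concrete counter-orientation.} Take $G$ edgeless and $k\ge 3$. Let $a$ send one edge to $b_1$, let every other edge between $A$ and $B$ point into $A$, and neutralize $D$ as above. The out-degrees are $d^+(a)=1$, $0$ on $A\setminus\{a\}$, $k-1$ or $k$ on $B$, $M$ on $D$, and $0$ or $1$ on $C$. This orientation is proper with maximum out-degree $M$, so your graph is not a $[1,k+1]$-forcer. The ``alternative'' paragraph does not repair this, because the gadgets that would ``force $d^+(b)=0$ or a specific value'' are never specified.

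\textbf{The missing idea.} Index the witnesses by subsets of $A$, and apply pigeonhole to the out-degree budget of $A$ rather than to $D$.

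\emph{Values $1,\dots,k$.} For each nonempty $S\subseteq A$, take a set $B_S$ of $kM+1$ vertices adjacent exactly to $S$. The set $S$ sends out at most $|S|M<kM+1$ edges, so some $b\in B_S$ has all its edges oriented into $S$, giving $d^+(b)=|S|$. Choosing $S\ni a$ with $|S|=i$ forbids $d^+(a)=i$ for every $1\le i\le k$.

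\emph{Value $k+1$.} Take $kM+1$ disjoint $k$-sets $C_j$, each complete to $A$, and add one vertex $d_j$ complete to each $C_j$. Some $C_j$ is oriented entirely into $A$, so its vertices (which have degree $k+1$) have out-degree $k$ or $k+1$. If all of them had out-degree $k$, then $d_j\to C_j$ and $d^+(d_j)=k$, contradicting properness. Hence some vertex adjacent to all of $A$ has out-degree $k+1$.

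So in a working construction both $B$ and $C$ are adjacent to $A$, and $D$ serves only to break the tie at the value $k$. Your design instead uses $D$ as a heavy intermediate layer, and that role cannot be realized.
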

\begin{proof}
We construct \(B,C,D\) and edges in $F_1$ as follows; a schematic illustration is given in Figure~\ref{fig:F1-forcer}. For every 
\(\emptyset\neq S\subseteq A\), add a set \(B_S\) of \(kM+1\) vertices, each of which
is adjacent exactly to all vertices of \(S\). Let
\(B=\bigcup_{\emptyset\neq S\subseteq A} B_S .\)

Next, take \(kM+1\) pairwise disjoint \(k\)-sets
\(C_1,C_2,\ldots,C_{kM+1}.
\)
All vertices of every \(C_j\) are adjacent to all vertices of \(A\). For
each \(j\), add one vertex \(d_j\) adjacent exactly to all \(k\) vertices
of \(C_j\). Let
\[
        C=C_1\cup C_2\cup\cdots\cup C_{kM+1}
        \quad\text{and}\quad
        D=\{d_1,d_2,\ldots,d_{kM+1}\}.
\]
This defines the bipartite graph \(F_1\) with bipartition
\(\{A\cup D,\ B\cup C\}.
\)

We now prove that \(F_1\) is a \([1,k+1]\)-forcer for \(A\) with upper bound $M$.
 Let \(p\) be any
proper orientation of \(G\cup F_1\) satisfying
\(d^+(v)\leq M\) for every \(v\in V(G\cup F_1).
\)

First we show that no vertex of \(A\) has out-degree in
\([1,k]\).   For any \(a\in A\) and any \(1\leq i\leq k\), choose an
\(i\)-subset \(S\subseteq A\) with \(a\in S\). Note that every vertex in \(B_S\)
has degree \(i\). If every vertex in \(B_S\) has out-degree at most \(i-1\), then each vertex in \(B_S\) receives at least one edge from \(S\).
Hence, at least
\(|B_S|=kM+1\)
edges are oriented from \(S\) to \(B_S\). But  the number of edges oriented
out of \(S\) is at most
\(iM< kM+1,\)
a contradiction. Therefore some vertex of \(B_S\), which is adjacent to \(a\),
has out-degree \(i\). Hence 
\(d^+(a)\neq i\) for any $a\in A$ and any $1\leq i\leq k$.

It remains to forbid the value \(k+1\). Since \(|A|=k\), at most \(kM\)
edges can be oriented from \(A\) to \(C\). However, \(C\) is the union of
\(kM+1\) disjoint \(k\)-sets \(C_j\). 
There must exist some \(C_j\) such that $ C_j\to A$, and hence
\(d^+(c)\geq k\) for any $c\in C_j$.
Note that $d(c)=k+1$.
If every vertex of \(C_j\) had out-degree exactly \(k\), then  \(d_j\to C_j\).
But then
\(d^+(d_j)=k,
\) contradicting properness.
 Therefore some vertex \(c\in C_j\) has out-degree \(k+1\).
 Hence, by properness,
\( d^+(a)\neq k+1
\) for every $a\in A$.
\end{proof}\begin{figure}[htbp]
\centering
\begin{tikzpicture}[
    x=1cm,y=1cm,
    every node/.style={font=\small},
    line width=0.9pt,
    line cap=round,
    line join=round,
    edge/.style={black, line width=0.55pt}
]

\node[draw,ellipse,minimum width=2.2cm,minimum height=2.2cm] (A) at (-3.2,1.8) {};
\node at (-3.2,2.65) {\(\,A\,\)};
\node at (-3.2,2.33) {size \(k\)};

\node[draw,ellipse,minimum width=1.2cm,minimum height=1.2cm] (S) at (-3.1,1.45) {\(S\)};

\node[draw,ellipse,minimum width=6.2cm,minimum height=4.1cm] (B) at (3,2.0) {};
\node at (3.4,3.5) {\(B=\bigcup_{\emptyset \neq S\subseteq A}B_S\)};

\node[draw,ellipse,minimum width=3cm,minimum height=3cm] (BS) at (3.0,1.6) {};
\node at (3.0,2.5) {\(B_S\)};
\node at (3.0,2.05) {size \(kM+1\)};

\node[draw,ellipse,minimum width=5.0cm,minimum height=3.3cm] (C) at (2.9,-2.0) {};
\node at (2.9,-0.8) {\(C=\bigcup_{j=1}^{kM+1}C_j\)};

\node[draw,ellipse,minimum width=2.2cm,minimum height=2.2cm] (Cj) at (2.6,-2.35) {};
\node at (2.6,-1.7) {\(C_j\)};
\node at (2.6,-2.1) {size \(k\)};

\node[draw,ellipse,minimum width=3cm,minimum height=3cm] (D) at (-3.0,-1.85) {};
\node at (-3.0,-0.75) {\(D\)};
\node at (-3.0,-1.15) {size \(kM+1\)};

\coordinate (dj) at (-2.25,-2.0);
\fill (dj) circle (1.5pt);
\node[left=3pt] at (dj) {\(d_j\)};

\draw[edge]
    ($(S.east)+(-0.4,0.57)$)
    to[out=8,in=178]
    ($(BS.west)+(0.25,0.82)$);

\draw[edge]
    ($(S.east)+(-0.11,0.32)$)
    to[out=3,in=180]
    ($(BS.west)+(0.07,0.36)$);

\draw[edge]
    ($(S.east)+(0,0.05)$)
    to[out=0,in=182]
    ($(BS.west)+(0,-0.16)$);

\draw[edge]
    ($(A.east)+(-0.2,-0.6)$)
    to[out=-30,in=154]
    ($(Cj.west)+(0.43,0.88)$);

\draw[edge]
    ($(A.east)+(-0.4,-0.85)$)
    to[out=-30,in=157]
    ($(Cj.west)+(0.24,0.65)$);

\draw[edge]
    ($(A.east)+(-0.7,-1.02)$)
    to[out=-30,in=162]
    ($(Cj.west)+(0.1,0.4)$);

\draw[edge]
    ($(dj)+(0.04,0.02)$)
    to[out=3,in=180]
    ($(Cj.west)+(0,0.08)$);

\draw[edge]
    ($(dj)+(0.04,0)$)
    to[out=-4,in=180]
    ($(Cj.west)+(0.05,-0.22)$);

\draw[edge]
    ($(dj)+(0.04,-0.02)$)
    to[out=-11,in=180]
    ($(Cj.west)+(0.13,-0.50)$);

\end{tikzpicture}
\caption{The \(\{1,\ldots,k+1\}\)-forcer \(F_1\).}
\label{fig:F1-forcer}
\end{figure}

Chen, Mohar and Wu~\cite{CM} use this bipartite
 forcer \(F_1\)
 to construct the extremal graphs for Theorem \ref{CMW1}. In order to obtain 
 the extremal graphs for $r$-partite graphs, we construct the following  $r$-partite forcer $F_2$, 
which is built from a new mechanism, as shown in Figure \ref{fig:F2-forcer}.
\begin{lem}\label{lemma:high-forcer}
Let \(r\geq 3\), and let \(a,b\) be integers such that
\(1\le a\leq b\) and \(
b-a+1\leq r-1.\)
Assume that
\(k'=k+a-r+1\ge 2\). Let \(A\) be an independent set of \(k'\) vertices in some graph $G$, and let
\(M\geq k+b\). There is an \(r\)-partite    \([k+a,k+b]\)-forcer \(F_2\) for \(A\) with upper
bound \(M\).
\end{lem}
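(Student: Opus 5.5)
The plan is to imitate the second half of the proof of Lemma~\ref{lemma:low-forcer}, the part built from the $C_j$'s and $d_j$'s. Put $A$ in part $1$ and the gadget vertices complete to $A$ in part $2$. Use the remaining parts $3,\dots,r$, and also parts $1,2$ away from $A$, to build a small rigid gadget whose out-degrees are pinned down. For each $t\in[a,b]$ the goal is to force, by pigeonhole, some neighbour of every vertex of $A$ to have out-degree exactly $k+t$. That neighbour's out-degree is collected partly from $A$ and partly from the rigid gadget. Write
\[
k+t=k'+(t-a+r-1),\qquad t-a+r-1\in[r-1,\,r-1+b-a]\subseteq[r-1,\,2r-3].
\]
So a gadget vertex $c$ that takes all $k'$ edges from $A$ must gain $s:=t-a+r-1$ further out-edges inside its private gadget, where $r-1\le s\le 2r-3$. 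This is why the hypotheses $b-a+1\le r-1$ and $k'=k+a-r+1$ appear.

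Concretely, for each $t\in[a,b]$ I take $N:=k'M+1$ disjoint $k'$-sets $C^t_1,\dots,C^t_N$ in part $2$, all complete to $A$. By the same count as in Lemma~\ref{lemma:low-forcer}, at most $k'M<N$ edges leave $A$. Hence for each $t$ some $C^t_j$ satisfies $C^t_j\to A$, and every $c\in C^t_j$ already has out-degree $k'$ from $A$. I then attach to $C^t_j$ a gadget $H^t_j$ in which each $c$ has exactly $s+1$ further neighbours. The gadget should be built from a clique $K_{r-1}$ on parts $1,3,\dots,r$ (avoiding $A$ itself) together with pendant copies, so that $d(c)=k'+s+1=k+t+1$.

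The argument then mirrors the role of $d_j$. Suppose no vertex of $C^t_j$ has out-degree exactly $k+t$. Then every $c\in C^t_j$ has out-degree at most $k+t-1$, i.e.\ at most $s-1$ out-edges into $H^t_j$, or it has out-degree exactly $k+t+1$. I will make $H^t_j$ large and structured so that either alternative produces a conflict. In the first case many edges are oriented from $H^t_j$ into $C^t_j$, and summing out-degrees over the clique in $H^t_j$ forces two adjacent clique vertices to share an out-degree. In the second case every $c$ points into the whole gadget, which forces a gadget vertex to have the same out-degree as some $c$. Properness then gives $d^+(a)\ne k+t$ for all $a\in A$. Taking the union over all $t\in[a,b]$ of these gadgets gives $F_2$. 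It is $r$-partite by construction, and $M\ge k+b$ guarantees the forced value $k+t$ is admissible.

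The main obstacle is designing $H^t_j$ so that the neighbour's out-degree is forced to be \emph{exactly} $k+t$ rather than merely at least $k'$. The clique on $r-1$ parts is what lets the extra out-degree reach up to $2r-3$ while keeping the graph $r$-partite. I expect the delicate point to be the out-degree counting inside the clique, which must rule out every orientation except the one producing value $k+t$. If a single gadget is not rigid enough, my first fallback is to take $N$ copies of the gadget per $c$ as well, and pigeonhole once more.
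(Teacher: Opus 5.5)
Your proposal has a genuine gap. The only step you actually carry out is the pigeonhole giving some $C^t_j\to A$, which is the analogue of the paper's choice of one copy $P_t$. Everything that makes the lemma nontrivial is delegated to a gadget $H^t_j$ that you never construct: forcing a common neighbour of $A$ to have out-degree exactly $k+t$, and in particular at least $k+a$. The mechanism you sketch for that gadget cannot work as stated. After the pigeonhole, every $c\in C^t_j$ has $d^+(c)\ge k'$, so a gadget vertex can constrain $c$ only by clashing with it, which needs out-degree at least $k'$.

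\emph{Low-degree gadget vertices force nothing.} Suppose all gadget vertices have degree below $k'$, as with pendant vertices or a $K_{r-1}$ hanging off a few $c$'s. Orient every gadget--$C^t_j$ edge into $C^t_j$, so each $c$ has out-degree $k'<k+a$. Then orient the gadget properly, which is always possible for any additive shift of out-degrees: greedily remove a vertex maximizing residual degree plus shift.

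\emph{High-degree gadget vertices are uncontrolled.} Suppose instead gadget vertices have many neighbours, e.g.\ they are complete to $C^t_j$. Then their out-degrees can dodge. Take an edge $z_1z_2$ complete to $C^t_j$ and orient $C^t_j\to z_1$, $z_2\to C^t_j$, $z_1\to z_2$. This gives $d^+(c)=k'+1$, $d^+(z_1)=1$, $d^+(z_2)=k'$, which is proper on the gadget, and no $c$ reaches $k'+2$ (which equals $k+a$ when $r=3$).

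Two smaller issues. Your two cases are not a dichotomy: each $c$ separately avoids $k+t$, so mixtures occur, and choosing $d(c)=k+t+1$ creates the upper alternative needlessly. Also, attaching $N$ gadget copies per $c$ changes $d(c)$ and gives no bounded set to pigeonhole over.

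The missing idea is to give the gadget vertices a known high baseline through a \emph{joint} pigeonhole, and to let a clique cover the whole interval at once. The paper uses single vertices $x^1,\dots,x^{b-a+1}$ forming a clique $X$, each complete to $A$. Each $x^i$ lies in $Q_i=K_{r+1}-x^iy^i_0$ on $\{x^i,y^i_0,\dots,y^i_{r-1}\}$, and each $y^i_j$ is complete to an auxiliary $(k'-2)$-set $Y_j$. With $T=M(k'+r(k'-2))+1$ copies, counting out-edges of $A\cup Y_0\cup\dots\cup Y_{r-1}$ yields a copy in which $d^+(x^i)=k'+d^+_X(x^i)+d^+_{Q_i}(x^i)$ and $d^+(y^i_j)=k'-2+d^+_{Q_i}(y^i_j)$. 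The sets $Y_j$ are precisely what pins the $y$'s to a baseline comparable with $x^i$. Edge counting then forces $\{d^+_{Q_i}(y^i_j)\}=[0,r]\setminus\{d^+_{Q_i}(x^i)+1\}$. If $d^+_X(x^i)+d^+_{Q_i}(x^i)\le r-2$, the value $d^+_X(x^i)+d^+_{Q_i}(x^i)+2$ would clash with $x^i$ unless it sits at $y^i_0$, and that contradicts transitivity of $Q_i+(x^i,y^i_0)$. Hence $d^+(x^i)\ge k+a$, while $d^+(x^i)\le k+b$ holds by degree. The clique $X$ of $b-a+1$ vertices then realizes every value of $[k+a,k+b]$ simultaneously, so no per-$t$ gadget supplying up to $2r-3$ extra out-edges is needed.
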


\begin{proof}
 Let
\(Y_0,Y_1,\ldots,Y_{r-1}
\)
be pairwise disjoint  \((k'-2)\)-sets.
First, we construct one forcing block \(P^*\). For every
\(1\leq i\leq b-a+1\), let \(Q_i\) be obtained from a complete graph on
vertices
\(\{x^i,y_0^i,y_1^i,\ldots,y_{r-1}^i\}
\)
by deleting the edge \(x^iy_0^i\). Let \(P^*\) be the graph obtained
from the disjoint union of
\(Q_1,Q_2,\ldots,Q_{b-a+1}
\)
by adding all edges \(x^ix^j\) for \(1\leq i<j\leq b-a+1\).
Take
\(T=M\bigl(k'+r(k'-2)\bigr)+1
\)
copies of \(P^*\), denoted by
\(P_1,P_2,\ldots,P_T.
\)
Let
\(P=P_1\cup P_2\cup\cdots\cup P_T.\)
Now join every copy of \(x^i\) to all vertices of \(A\), and join every copy of
\(y_j^i\) to all vertices of \(Y_j\). 
Then we get the resulting graph \(F_2\)  with vertex set
\(A\cup V(P)\cup Y_0\cup\cdots\cup Y_{r-1}.
\)

We first indicate that \(F_2\) is \(r\)-partite. Put the vertices of
\(A\) and \(Y_0\) in one color class, say \(V_0\). For each
\(1\leq i\leq b-a+1\), put \(x^i\) and \(y_0^i\) in \(V_i\). For
\(1\leq j\leq r-1\), put \(Y_j\) in \(V_j\), and put \(y_j^i\) in
\(V_{i+j}\), where the subscripts are taken modulo \(r\). Since $b-a+1\leq r-1$, one can check that this gives a proper coloring. Moreover, the clique on vertices $\{y_0^i,\ldots,y_{r-1}^i\}$ shows that $\chi(F_2)=r$.

Now we show that the graph \(F_2\) is a \([k+a,k+b]\)-forcer for \(A\) with upper bound $M$.   Let \(p\) be
any proper orientation of \(G\cup F_2\) satisfying
\(d^+(v)\leq M
\) for every \(v\in V(G\cup F_2).
\)
Since
$T=M\bigl(k'+r(k'-2)\bigr)+1$, $|A|=k'$ and $\sum_{j=0}^{r-1}|Y_j|=r(k'-2)$,
there must exist a  \(P_t\subseteq P\) such that 
\(P_t\to 
        A\cup Y_0\cup\cdots\cup Y_{r-1}
\). Now we focus on such $P_t$. Then in  $P_t$, $x^i\to A$ and $y_j^i\to Y_j$ for any $i,j$.
Let
\(X=\{x^1,x^2,\ldots,x^{b-a+1}\}
\). For any  \(x^i\in X\),
\begin{align}\label{dxq}
d^+(x^i)=k'+d_X^+(x^i)+d_{Q_i}^+(x^i),
\end{align}
For every \(0\leq j\leq r-1\),
\begin{align}\label{dyq}
d^+(y_j^i)
=k'-2+d_{Q_i}^+(y_j^i).
\end{align}

\begin{claim}\label{X+Q}
 \(
        d_X^+(x^i)+d_{Q_i}^+(x^i)\geq r-1.
\)
\end{claim}
\begin{proof}
    
Suppose to the contrary that  \(
        d_X^+(x^i)+d_{Q_i}^+(x^i)\le r-2.
\)
Note that the vertices
\(y_0^i,y_1^i,\ldots,y_{r-1}^i
\)
form a clique. By (\ref{dyq}), the values \(d_{Q_i}^+(y_j^i)\), \(0\le j\le r-1\), are pairwise distinct.
By counting the edges of \(Q_i\) in two ways, we have $\sum_{j=0}^{r-1}d_{Q_i}^{+}(y_j^i)=\binom{r}{2}+(r-1)-d_{Q_i}^{+}(x^i)=\sum_{\ell=0}^{r}\ell-(d_{Q_i}^{+}(x^i)+1)$. Since $0\leq d_{Q_i}^+(y_j^i)\leq r $, we have 
\begin{align}\label{dy-xq}
\{d_{Q_i}^+(y_j^i):0\leq j\leq r-1\}
        =
        \{0,1,\ldots,r\}\backslash \{d_{Q_i}^+(x^i)+1\}.
\end{align}

Observe that
the value \(d_X^+(x^i)+d_{Q_i}^+(x^i)+2\) belongs to this set.
If this value occurs at some \(y_j^i\) adjacent to \(x^i\), then
\[d^+(y_j^i)
=k'-2+d_X^+(x^i)+d_{Q_i}^+(x^i)+2=d^+(x^i),
\]
contradicting  properness. Hence this value must occur at
the unique vertex \(y_0^i\) not adjacent to \(x^i\).

By  (\ref{dy-xq}),  $Q_i+(x^i,y_0^i)$ is a transitive tournament on \(r+1\) vertices.  But the added arc $(x^i,y_0^i)
$ goes from a vertex
of out-degree \(d^+_{Q_i}(x^i)+1\) to a vertex of larger out-degree \(d^+_{X}(x^i)+d^+_{Q_i}(x^i)+2\).
 This is impossible in a transitive tournament. Therefore
\(d_X^+(x^i)+d_{Q_i}^+(x^i)\geq r-1.
\)
\end{proof}
Since
\(
        d_{Q_i}^+(x^i)\leq r-1,
\)  \(
        d_{X}^+(x^i)\leq b-a,
\) and $k'=k+a-r+1$, by Claim \ref{X+Q} and 
equality~\eqref{dxq}, we obtain
\[
k+a\leq d^+(x^i)
        \leq
        k+b.
\]
Since \(X\) is a clique,  \(\{d^+(x^1),\ldots,d^+(x^{b-a+1})\}=\{k+a,\ldots,k+b\}.
\)
Hence, by
properness, no vertex of \(A\) can have an out-degree in
\([k+a,k+b].
\)
Thus \(F_2\) is an $r$-partite \([k+a,k+b]\)-forcer for \(A\) with upper
bound \(M\).
\end{proof}
\begin{figure}[htbp]
\centering
\begin{tikzpicture}[
    x=1cm,y=1cm,
    every node/.style={font=\small},
    line cap=round,
    line join=round,
    edge/.style={black, line width=0.55pt},
    missing/.style={black, dashed, line width=0.45pt},
    set/.style={draw, ellipse, minimum width=1.65cm, minimum height=1.05cm},
    copybox/.style={draw, rectangle, minimum width=1.15cm, minimum height=0.55cm},
    blockbox/.style={draw, rectangle, rounded corners=2pt,
                     minimum width=6.3cm, minimum height=5.6cm},
    qbox/.style={draw, ellipse, minimum width=1.65cm, minimum height=1.05cm},
    bigqbox/.style={draw, ellipse, minimum width=3.35cm, minimum height=3.10cm}
]

\node[copybox] (P1) at (-0.9,2.75) {\(P_1\)};
\node at (0.75,2.75) {\(\cdots\)};
\node[copybox] (Ptminus) at (2.45,2.75) {\(P_{t-1}\)};

\node[copybox] (Ptplus) at (-0.9,-3.9) {\(P_{t+1}\)};
\node at (0.75,-3.9) {\(\cdots\)};
\node[copybox] (PT) at (2.45,-3.9) {\(P_T\)};

\node[blockbox] (Pt) at (1.0,-0.6) {};
\node at (3.4,1.7) {\(P_t\)};

\node[set] (Y0) at (-3.45,1.35) {\(Y_0\)};
\node[set] (Y1) at (-3.45,0.20) {\(Y_1\)};
\node at (-3.85,-0.95) {\(\vdots\)};
\node[set] (Yr) at (-3.45,-2.25) {\(Y_{r-1}\)};

\node[draw, ellipse, minimum width=1.9cm, minimum height=2.2cm] (A) at (5.55,-0.6) {\(A\)};

\node[qbox] (Q1) at (0.15,1.30) {};
\node at (-0.18,1.30) {\(Q_1\)};

\coordinate (xone) at (0.6,1.42);
\fill (xone) circle (1.35pt);
\node[left=-2pt] at (xone) {\(x^1\)};

\node[rotate=90] at (0.25,0.50) {\(\cdots\)};

\node[bigqbox] (Qi) at (0,-1.35) {};
\node at (0.40,-0.20) {\(Q_i\)};

\coordinate (yi0) at (-0.42,-0.45);
\coordinate (yi1) at (-0.82,-0.78);
\coordinate (yid) at (-0.76,-1.44);
\coordinate (yir) at (-0.40,-1.86);
\coordinate (xi)  at (0.95,-1.12);

\fill (yi0) circle (1.25pt);
\fill (yi1) circle (1.25pt);
\fill (yir) circle (1.25pt);
\fill (xi)  circle (1.35pt);

\node[left=-13pt,yshift=8pt] at (yi0) {\(y_0^i\)};
\node[left=-5pt,yshift=-8pt] at (yi1) {\(y_1^i\)};
\node[left=-21pt,yshift=-7pt] at (yir) {\(y_{r-1}^i\)};
\node[rotate=24] at (yid) {\(\vdots\)};
\node[above right,xshift=1pt,yshift=-1pt] at (xi) {\(x^i\)};

\draw[missing] (xi) -- (yi0);
\draw[edge] (xi) -- (yi1);
\draw[edge] (xi) -- (yir);
\draw[edge] (yi0) -- (yi1);
\draw[edge] (yi1) -- (yir);
\draw[edge] (yi0) -- (yir);

\node at (1.55,-2.5) {\(\cdots\)};

\node[qbox, minimum width=1.95cm, minimum height=1.10cm] (Qm) at (2.90,-2.50) {};
\node at (2.83,-2.74) {\(Q_{b-a+1}\)};

\coordinate (xm) at (2.60,-2.13);
\fill (xm) circle (1.35pt);
\node[left=-34.5pt,yshift=-4.7pt] at (xm) {\(x^{b-a+1}\)};

\draw[edge] (xone) -- (xi);
\draw[edge] (xi) -- (xm);
\draw[edge] (xone) -- (xm);

\draw[edge]
    ($(Y0.east)+(-0.1,0.24)$)
    to[out=-20,in=145]
    ($(yi0)+(-0.0,0.005)$);

\draw[edge]
    ($(Y0.east)+(-0.05,-0.16)$)
    to[out=-18,in=160]
    ($(yi0)+(-0,-0.005)$);

\draw[edge]
    ($(Y1.east)+(-0.1,0.24)$)
    to[out=-20,in=158]
    ($(yi1)+(-0.,0.005)$);

\draw[edge]
    ($(Y1.east)+(-0.09,-0.23)$)
    to[out=-18,in=165]
    ($(yi1)+(-0.03,-0.005)$);

\draw[edge]
    ($(Yr.east)+(-0.1,0.24)$)
    to[out=5,in=185]
    ($(yir)+(-0.0,0.005)$);

\draw[edge]
    ($(Yr.east)+(-0.05,-0.18)$)
    to[out=20,in=200]
    ($(yir)+(-0.0,-0.005)$);

\draw[edge]
    ($(xone)+(0,0.005)$)
    to[out=2,in=180]
    ($(A.west)+(0.53,0.99)$);

\draw[edge]
    ($(xone)+(0,-0.005)$)
    to[out=-4,in=180]
    ($(A.west)+(0.19,0.65)$);

\draw[edge]
    ($(xi)+(0.05,0.005)$)
    to[out=0,in=180]
    ($(A.west)+(0.02,0.18)$);

\draw[edge]
    ($(xi)+(0.05,-0.005)$)
    to[out=-8,in=180]
    ($(A.west)+(0.03,-0.25)$);

\draw[edge]
    ($(xm)+(0.0,0.005)$)
    to[out=20,in=200]
    ($(A.west)+(0.25,-0.74)$);

\draw[edge]
    ($(xm)+(0.0,-0.005)$)
    to[out=10,in=200]
    ($(A.west)+(0.62,-1.03)$);
\end{tikzpicture}
\caption{A schematic picture of the \([k+a,k+b]\)-forcer \(F_2\). The rectangle shows one copy \(P_t\); the other copies \(P_1,\ldots,P_T\) are indicated outside.}
\label{fig:F2-forcer}
\end{figure}

\subsection{Controlling  $\text{Mad}(G)$} Chen, Mohar and Wu \cite{CM} controlled the maximum average degree of their forcers via
\(k\)-degeneracy. However, our forcers are no longer \(k\)-degenerate. In this
subsection, we develop a more general method (inspired by Hakimi's Lemma \ref{l1}) to control the maximum average
degree.

\begin{lem}\label{lemma:mad-charging}
Let \(G\) be a graph, and let \(U\subseteq V(G)\). Suppose that:
 (a)
\(\text{Mad}(G[U])\leq 2k;
\)
(b)
\(G- E(G[U])\) admits an orientation such that every vertex in $U$ has zero out-degree, and every vertex outside \( U\) has
 out-degree at most \(k\).
Then
\(\text{Mad}(G)\leq 2k.
\)
\end{lem}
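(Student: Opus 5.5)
We argue directly from the definition of $\text{Mad}$. Fix an arbitrary subgraph $H\subseteq G$; it suffices to show $|E(H)|\le k|V(H)|$. Since deleting edges only decreases $|E(H)|$, we may assume $H$ is an induced subgraph, say $H=G[W]$ for some $W\subseteq V(G)$. We split $W=W_U\cup W_O$ with $W_U=W\cap U$ and $W_O=W\setminus U$, and we split the edge set of $H$ into two parts: the edges inside $G[W_U]$, and all remaining edges, namely those with at least one endpoint in $W_O$.

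The first part is handled by hypothesis (a). Since $G[W_U]$ is a subgraph of $G[U]$, we have
\[
2|E(G[W_U])|/|W_U|\le \text{Mad}(G[U])\le 2k
\]
when $W_U\neq\emptyset$. This gives $|E(G[W_U])|\le k|W_U|$, and the bound holds trivially if $W_U=\emptyset$.

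The second part is handled by hypothesis (b). Every edge of $H$ with at least one endpoint in $W_O$ is not in $E(G[U])$, so it lies in $G-E(G[U])$ and receives an orientation there. Its tail cannot be in $U$, because vertices of $U$ have out-degree zero in that orientation, so the tail is a vertex of $W_O$. Each vertex of $W_O$ has out-degree at most $k$, so the number of such edges is at most $k|W_O|$.

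Adding the two bounds gives
\[
|E(H)|\le k|W_U|+k|W_O|=k|W|.
\]
Hence every subgraph has average degree at most $2k$, and therefore $\text{Mad}(G)\le 2k$.

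The only step needing care is checking that the edges are partitioned correctly. An edge between $W_U$ and $W_O$ is not in $G[U]$, so it is counted by the orientation and is charged to its $W_O$ endpoint. Edges inside $W_O$ are likewise charged to their tails. No edge is counted in both parts, and none is missed.
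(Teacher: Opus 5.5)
Your proof is correct and is essentially the paper's argument: bound the edges of $H$ with both ends in $U$ via (a), charge every other edge to its tail outside $U$ using the orientation from (b), and add the two bounds. Your reduction to induced subgraphs is harmless but not needed, since the paper applies the same split directly to an arbitrary $H$.
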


\begin{proof}
Let \(H\subseteq G\) be any subgraph. It suffices to prove that 
\(e(H)\leq k|V(H)|.
\)
By condition (a), we have
\(e(H[V(H)\cap U])\leq k|V(H)\cap U|.
\)
 By condition (b), there is an orientation of $H-E(H[V(H)\cap U])$ such that every vertex in $V(H)\cap U$ has zero out-degree and every vertex outside $V(H)\cap U$ has out-degree at most $k$. Then
        $e(H)-e(H[V(H)\cap U])\leq k|V(H)\backslash U|$.
Therefore
\(e(H)\leq k|V(H)\cap U|+k|V(H)\backslash U|=k|V(H)|.
\)
\end{proof}

We now show that the forcers \(F_1\) and \(F_2\) constructed above satisfy
the condition (b)  in Lemma~\ref{lemma:mad-charging}.
\begin{lem}\label{lemma:forcer-charging}
Let \(r\geq 3\), and let \(a,b\) be integers with
\(1\le a\le b\) and \(b-a+1\le r-1\). Suppose that
\(k'=k+a-r+1\ge 2,
\)
        \(k'+\left\lceil\frac{b-a}{2}\right\rceil\le k\),
\(
        k'-1+\left\lceil\frac{r-1}{2}\right\rceil\le k
\) and $M\geq k+b$.
Let \(A\) be an independent set of size $k$ and let \(A'\subseteq A\) be a \(k'\)-set in some graph $G$.
Then the following statements  hold.

\smallskip
\noindent{\rm (i)}
Let \(F_1\) be the \([1,k+1]\)-forcer for \(A\) with upper bound $M$ constructed in
Lemma~\ref{lemma:low-forcer}. Then \(F_1\) admits an orientation such
that every vertex in $A$ has zero out-degree  and every vertex
outside \( A\) has out-degree at most \(k\).

\smallskip
\noindent{\rm (ii)}
Let \(F_2\) be the \([k+a,k+b]\)-forcer for \(A'\)  with upper bound $M$ constructed in
Lemma~\ref{lemma:high-forcer}. Then \(F_2\) admits an orientation in
which every vertex in $A'$ has zero out-degree  and every vertex
outside \( A'\) has out-degree at most \(k\).
\end{lem}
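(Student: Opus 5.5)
For both parts I will write down an explicit orientation, built piece by piece along the construction. The two edge sets I must handle carefully are the edges at the vertices of $A$ (resp.\ $A'$) and the edges at the large "absorbing" sets ($B$, $C$, $D$, and the $Y_j$). The guiding principle is that a vertex of unbounded degree, such as a vertex of $A$ or of some $Y_j$, must receive all of its edges. The bounded-degree gadget vertices then pay for those edges with their out-degree.

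For (i), orient every edge between $B_S$ and $S$ as $B_S\to S$. Each $b\in B_S$ then has out-degree $|S|\le k$, and $A$ receives these edges. For each $j$, orient $C_j\to A$ and $d_j\to C_j$. A vertex $c\in C_j$ has exactly $k$ neighbours in $A$ and one neighbour $d_j$, so $d^+(c)=k$. The vertex $d_j$ has out-degree $|C_j|=k$. These are all the edges of $F_1$, and every edge at $A$ points into $A$. So this orientation has the required properties.

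For (ii), start with $x^i\to A'$ for every copy of every $x^i$ and $y^i_j\to Y_j$ for every copy of every $y^i_j$. This gives $A'$ and every $Y_j$ out-degree $0$, and it contributes $k'$ to $d^+(x^i)$ and $k'-2$ to $d^+(y^i_j)$. Inside each copy of $P^*$, I use the standard fact that $K_n$ has an orientation with all out-degrees at most $\lceil (n-1)/2\rceil$. For odd $n$ this is a regular tournament; for even $n$, take a regular tournament on $K_{n+1}$ and delete a vertex. Orient the clique $X$ of size $b-a+1$ this way, so $d^+_X(x^i)\le\lceil\frac{b-a}{2}\rceil$. Orient each clique $\{y^i_0,\dots,y^i_{r-1}\}$ of size $r$ this way, giving each $y^i_j$ at most $\lceil\frac{r-1}{2}\rceil$ out-edges there. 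Finally, orient the edges $y^i_jx^i$ ($1\le j\le r-1$) as $y^i_j\to x^i$.

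Now I check the bounds. For each $x^i$,
\[
d^+(x^i)\le k'+\left\lceil\tfrac{b-a}{2}\right\rceil\le k.
\]
For each $y^i_j$ with $j\ge1$,
\[
d^+(y^i_j)\le k'-2+\left\lceil\tfrac{r-1}{2}\right\rceil+1=k'-1+\left\lceil\tfrac{r-1}{2}\right\rceil\le k.
\]
The vertex $y^i_0$ is not adjacent to $x^i$, so it satisfies the smaller bound $k'-2+\lceil\frac{r-1}{2}\rceil$. These are exactly the hypotheses of the lemma, so every vertex outside $A'$ has out-degree at most $k$.

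There is no real obstacle; the one point to get right is the choice of direction for the $x^i y^i_j$ edges. Sending them into $x^i$ would not fit the budget $k'+\lceil\frac{b-a}{2}\rceil$ in general, since $x^i$ would gain up to $r-1$ further out-edges if the edges went the other way. Sending them into $x^i$ instead charges each $y^i_j$ only one extra out-edge, which is precisely the $+1$ accounted for in the hypothesis $k'-1+\lceil\frac{r-1}{2}\rceil\le k$.
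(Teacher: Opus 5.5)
Your proposal is correct and follows the paper's proof essentially verbatim: $B\to A$, $C\to A$, $D\to C$ for $F_1$, and for $F_2$ the orientations $x^i\to A'$, $y^i_j\to Y_j$, $y^i_j\to x^i$ together with near-regular tournaments on $X$ and on each clique $\{y^i_0,\dots,y^i_{r-1}\}$, followed by the same out-degree count. One wording slip: in your last paragraph, the first ``Sending them into $x^i$'' should read ``Sending them out of $x^i$''.
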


\begin{proof}

 (i) In $F_1$, let \(B\to A\), \(C\to A\) and
\(D\to C\).  It is easy to check that every vertex in \(A\)
has out-degree \(0\) and  every vertex in \(B\cup C\cup D\) has
out-degree at most \(k\). 

(ii) Note that all the edges of $F_2$ are incident with some vertex of a forcing block $P_t$. In each forcing
block $P_t$ of $F_2$, let
\(X=\{x^1,x^2,\ldots,x^{b-a+1}\}.
\)
Orient \(x^i\to A'\) and  the clique on \(X\) as a tournament with maximum out-degree at
most
\(\left\lceil(b-a)/2\right\rceil.
\)
For every \(i\) and \(j\), orient 
\(y_j^i\to x^i\) whenever such an edge exists,  \(y_j^i\to Y_j\),   and  orient the clique
\(\{y_0^i,y_1^i,\ldots,y_{r-1}^i\}
\)
as a tournament with maximum out-degree at most
\(\left\lceil(r-1)/2\right\rceil.
\)

Under this orientation, every \(x^i\) has  \(k'\) out-edges to \(A'\), and
 at most
\(\left\lceil(b-a)/2\right\rceil
\)
out-edges to \(X\). Hence
\[
        d^+(x^i)
        \leq
        k'+\left\lceil\frac{b-a}{2}\right\rceil
        \leq k.
\]
Since  \(y_j^i\) has at most one out-edge
to \(x^i\), \(k'-2\) out-edges to \(Y_j\),   and  at most
\(\left\lceil(r-1)/2\right\rceil
\)
out-edges to
\(\{y_0^i,y_1^i,\ldots,y_{r-1}^i\}.
\)
Therefore
\[
        d^+(y_j^i)
        \leq
        (k'-2)+1+\left\lceil\frac{r-1}{2}\right\rceil
        =
        k'-1+\left\lceil\frac{r-1}{2}\right\rceil
        \leq k.
\]

Also, the vertices in \(A'\cup Y_0\cup\cdots\cup Y_{r-1}\) have out-degree zero. 
\end{proof}

\subsection{The construction}

Let \(r\geq 3\), $k>\frac{5r^2-10r+4}{2}$ and 
\(M=k+\left\lfloor \frac{5r}{2}\right\rfloor-3
\).
We first construct $G_0$.  Initially, $G_0=A$, where $A$ is an independent set  of size \(k\). Next, we attach to $A$ the following forcers.

(1) Attach to \(A\) a \([1,k+1]\)-forcer \(F_1\) for $A$ with upper bound \(M\) as in
Lemma~\ref{lemma:low-forcer}.

(2) Choose a subset
\(A^-\subseteq A
\)
of size
\(k'=:k-r+3.
\) Applying Lemma \ref{lemma:high-forcer} with $(a,b,A)=(2,r-1,A^-)$, 
attach to \(A^-\) a \([k+2,k+r-1]\)-forcer $F_2$ for $A^-$ with upper bound \(M\).

(3) If \(r\geq 4\), choose another subset \(A^*\) with 
\(A^-\subseteq A^*\subseteq A
\) of size  \(|A^*|=k-\left\lceil \frac{r}{2}\right\rceil+1\).  Applying Lemma \ref{lemma:high-forcer} with $(a,b,A)=(\lfloor \frac{r}{2}\rfloor,\lfloor \frac{3r}{2}\rfloor-2,A^*)$, attach to \(A^*\) a
\([k+\lfloor \frac{r}{2}\rfloor,k+\lfloor \frac{3r}{2}\rfloor-2]\text{-forcer}
\) $F_3$
with upper bound \(M\).

When \(r=3\), since \(r-1=\lfloor \frac{3r}{2}\rfloor-2=2\),  we do not use \(F_3\) to attach to $A$. 
That is,
\(
        G_0=F_1\cup F_2
\)
if \(r=3\) and
\(
        G_0=F_1\cup F_2\cup F_3
\)
if \(r\geq 4\). By construction, \(G_0\) is
\(r\)-partite.

Now we choose a subset
\(
        U\subseteq A^-
\)
of size
\(
        \left\lfloor \frac{2k}{r-1}\right\rfloor .
\)
For \(r=3\), we can see that \(|U|=|A^-|=|A|=k\).
For \(r\ge4\), this is possible since
\(
        \left\lfloor \frac{2k}{r-1}\right\rfloor \le k-r+3=|A^-|.
\)
Take \(r\) disjoint copies of \(G_0\), denoted by
\(G_1,G_2,\ldots,G_r.
\)
For each \(1\leq i\leq r\), denote the corresponding sets in \(G_i\) by
\(A^i, A^{i-}, A^{i*}, U^i,
\)
where \(A^{i*}\) is used only when \(r\geq 4\). Add all edges between
\(U^i\) and \(U^j\) for every \(i\neq j\). Denote the resulting graph by
\(G\), and put
\(U^*=U^1\cup\cdots\cup U^r .
\)
Then \(G[U^*]\) is a complete \(r\)-partite graph, which implies that \(\chi(G)=r\).

\begin{lem}\label{lemma:five-halves-construction}
For every  \(r\geq 3\) and  \(k>\frac{5r^2-10r+4}{2}\), the $r$-partite graph
\(G\) constructed above satisfies
\(\left\lceil \frac{\text{Mad}(G)}2\right\rceil=k
\)
and
\[
\vec{\chi}(G)\geq
k+\left\lfloor \frac{5r}{2}\right\rfloor-2.
\]
\end{lem}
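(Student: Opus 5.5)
We prove the two claims separately: first that $\lceil \text{Mad}(G)/2\rceil=k$, then the lower bound on $\vec{\chi}(G)$ by contradiction. Throughout, put $m=\lfloor 2k/(r-1)\rfloor=|U^i|$.

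\emph{Upper bound on $\text{Mad}(G)$.} We apply Lemma~\ref{lemma:mad-charging} with the set $U=A^1\cup\cdots\cup A^r$.
\begin{itemize}
\item For condition (a), $G[U]$ is the complete $r$-partite graph on $U^*$, with parts of size $m$, together with isolated vertices. Take a subgraph with part sizes $s_1,\dots,s_r$ and $S=\sum s_i$. It has $(S^2-\sum s_i^2)/2$ edges, so its average degree is at most $S-\max_i s_i\le (r-1)\max_i s_i\le (r-1)m\le 2k$.
\item For condition (b), every edge of $G-E(G[U])$ lies in a forcer copy. We orient each copy of $F_1$, $F_2$, $F_3$ as in Lemma~\ref{lemma:forcer-charging}.
\item The hypotheses of Lemma~\ref{lemma:forcer-charging} must be checked for both parameter choices. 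For $F_2$ we have $(a,b)=(2,r-1)$ and $k'=k-r+3$. For $F_3$ we have $(a,b)=(\lfloor r/2\rfloor,\lfloor 3r/2\rfloor-2)$ and $k'=k-\lceil r/2\rceil+1$. In both cases the inequalities $k'+\lceil (b-a)/2\rceil\le k$, $k'-1+\lceil (r-1)/2\rceil\le k$ and $M\ge k+b$ are routine.
\item The orientations of different forcers use disjoint edge sets. Each vertex outside $U$ belongs to exactly one forcer, so its out-degree stays at most $k$, and vertices of $U$ get out-degree $0$.
\end{itemize}
Hence $\text{Mad}(G)\le 2k$.

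\emph{Lower bound on $\text{Mad}(G)$.} In $F_1$, the vertices of $B_A$ together with all of $C$ (more than $k^2M$ vertices) are each adjacent to every vertex of $A$. So $G$ contains $K_{k,N}$ with $N>k(k-1)$, whose average degree $2kN/(k+N)$ exceeds $2k-2$. Together with the upper bound this gives $\lceil \text{Mad}(G)/2\rceil=k$.

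\emph{Lower bound on $\vec{\chi}(G)$.} Suppose $G$ has a proper orientation with maximum out-degree at most $M=k+\lfloor 5r/2\rfloor-3$. Every vertex of $U^i\subseteq A^{i-}\subseteq A^{i*}\subseteq A^i$ avoids each of the intervals
\[
[1,k+1],\qquad [k+2,k+r-1],\qquad [k+\lfloor r/2\rfloor,\;k+\lfloor 3r/2\rfloor-2].
\]
The third interval is present only for $r\ge4$; for $r=3$ the first two already reach $k+\lfloor 3r/2\rfloor-2=k+2$. Since $\lfloor r/2\rfloor\le r$, these intervals cover $[1,k+\lfloor 3r/2\rfloor-2]$. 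Every vertex of $U^*$ therefore has out-degree in
\[
\{0\}\cup[k+\lfloor 3r/2\rfloor-1,\,M],
\]
and since $\lfloor 5r/2\rfloor-\lfloor 3r/2\rfloor=r$, this set has exactly $r$ values.

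In the complete $r$-partite graph $G[U^*]$, any two vertices in different parts are adjacent. So the sets of values used on different parts $U^i$ are pairwise disjoint. There are $r$ nonempty parts and only $r$ values, so each part is monochromatic in out-degree. Exactly one part has all out-degrees $0$, and the other $r-1$ parts have out-degree at most $M$.

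Every edge of $G[U^*]$ must be an out-edge of some vertex in $U^*$, which gives
\[
\binom r2 m^2\le (r-1)mM,\qquad\text{that is,}\qquad rm\le 2M=2k+2\lfloor 5r/2\rfloor-6 .
\]

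\emph{The main obstacle} is to contradict this inequality using only the hypothesis $k>\tfrac{5r^2-10r+4}{2}$. The crude estimate $m\ge (2k-r+2)/(r-1)$ seems to need a somewhat larger $k$. I would first refine the count: the all-zero part receives all $(r-1)m^2$ edges from the other parts, and the remaining edges among the $r-1$ nonzero parts are counted separately. I would also use the exact residue of $2k$ modulo $r-1$, or compare with the bound $\lfloor 5r/2\rfloor\le 5r/2$, to match the stated threshold. If this still falls short, the fallback is to note that enlarging $k$ changes neither the construction nor the statement $\lceil \text{Mad}(G)/2\rceil=k$, and to check the arithmetic case by case on the parity of $r$.
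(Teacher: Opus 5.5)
\textbf{There is a genuine gap at the final step.} Everything up to the last count is sound: the two $\text{Mad}$ bounds, the forcer hypotheses, the interval covering, and the observation that each $U^i$ is monochromatic with all $r$ values used. The problem is that the inequality you end with cannot produce a contradiction. Bounding every nonzero part by $M$ gives $rm\le 2M$, and this is not false throughout the stated range. For $r=4$ and $k=23>22$ you have $m=15$ and $M=30$, so $\binom{4}{2}m^2=1350=(r-1)mM$. No residue or parity analysis can rescue an inequality that holds with equality there. Enlarging $k$ is also not available: $G$ depends on $k$, and the lemma is claimed for every $k$ above the threshold.

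\textbf{The missing step uses a fact you already proved.} Since the $r$ parts use all $r$ values, the $r-1$ nonzero parts carry the distinct values $k+\lfloor 3r/2\rfloor-1,\dots,M$, one value per part. Hence
\[
\binom r2 m^2\le \sum_{v\in U^*}d^+(v)=m\sum_{j=1}^{r-1}\Bigl(k+\Bigl\lfloor \frac{3r}{2}\Bigr\rfloor-2+j\Bigr)=m(r-1)\Bigl(k+\Bigl\lfloor\frac{3r}{2}\Bigr\rfloor+\frac r2-2\Bigr),
\]
which simplifies to $rm\le 2k+2\lfloor 3r/2\rfloor+r-4$.

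Combine this with $m\ge (2k-r+2)/(r-1)$. The inequality fails as soon as $2k>r^2-2r+(r-1)(2\lfloor 3r/2\rfloor+r-4)$. The right-hand side equals $5r^2-10r+4$ for even $r$ and $5r^2-11r+5$ for odd $r$, so this is exactly the stated threshold. In the example above it reads $60\le 58$, which is false. This is the paper's computation.

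\textbf{A smaller correction in condition (a).} The average degree of a complete multipartite graph with parts $s_i$ is $S-\sum_i s_i^2/S$, which is at least $S-\max_i s_i$. So your claimed inequality points the wrong way. The fix is simply to note that every vertex of $G[U^*]$ has degree $(r-1)m\le 2k$, so every subgraph has average degree at most $2k$.
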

\begin{proof}
 We first prove that
\(\left\lceil \frac{\text{Mad}(G)}2\right\rceil=k
\) by using Lemma \ref{lemma:mad-charging} with $(G,U)=(G,U^*)$.
Since \(G[U^*]\) is a complete \(r\)-partite graph with each part
of size \(\lfloor \frac{2k}{r-1}\rfloor\), it is easy to verify that $\text{Mad}(G[U^*])\leq 2k$, satisfying condition \({\rm (a)}\) of Lemma~\ref{lemma:mad-charging}.
Next we verify condition \({\rm (b)}\) of Lemma \ref{lemma:mad-charging}, that is, we only need to consider the subgraph $G-E(G[U^*])$. Note that \(G-E(G[U^*])\) is the union of the forcers attached in the
\(r\) copies.  Within each copy, these forcers may intersect only on the
corresponding set \(A^i\), and the selected set \(U^i\) is contained in
\(A^i\). And $U^*\ss \cup_{i=1}^r A^i$, where $A^i$ is a copy of $A$. It is easy to verify that all $F_1,F_2,F_3$ satisfy the conditions of Lemma~\ref{lemma:forcer-charging}.
By
Lemma~\ref{lemma:forcer-charging},
all edges of these forcers can be oriented so that every vertex in  \(\cup_{i=1}^r A^i\) has zero out-degree and every vertex outside \(\cup_{i=1}^r A^i\) has 
out-degree at most \(k\).
 By Lemma~\ref{lemma:mad-charging},
\(\text{Mad}(G)\leq 2k.
\)

On the other hand, in the \([1,k+1]\)-forcer, the subgraph induced by
\(A\) and \(C\) contains a complete bipartite graph with parts of sizes
\( k\) and \(k(kM+1).
\)
Its average degree is
\[
        \frac{2k\cdot k(kM+1)}{k+k(kM+1)}
        =2k-
        \frac{2k}{kM+2}.
\]
 Since $M=k+\left\lfloor 5r/2\right\rfloor-3$, this gives
\(\left\lceil \frac{\text{Mad}(G)}2\right\rceil=k.
\)
\medskip

Now
we prove the lower bound on \(\vec{\chi}(G)\). Suppose, for
a contradiction, that \(G\) has a proper orientation  such that
\(d^+(v)\leq M\) for every
       \(v\in V(G).
\)

Fix \(i\in\{1,\ldots,r\}\). Since \(F_1\) is a \([1,k+1]\)-forcer for
\(A^i\), every vertex of \(A^i\) avoids the out-degree values
\([1,k+1].
\)
Since \(F_2\) is a \([k+2,k+r-1]\)-forcer for \(A^{i-}\), every vertex
of \(A^{i-}\) avoids
\([k+2,k+r-1].
\)
If \(r\geq 4\), then \(F_3\) is a
\([k+\lfloor \frac{r}{2}\rfloor,k+\lfloor \frac{3r}{2}\rfloor-2]\text{-forcer}
\)
for \(A^{i*}\). Since
\(A^{i-}\subseteq A^{i*},
\)
every vertex of \(A^{i-}\) also avoids
\([k+\lfloor \frac{r}{2}\rfloor,k+\lfloor \frac{3r}{2}\rfloor-2].
\)
Thus every vertex of \(A^{i-}\), and hence every vertex of \(U^i\),
satisfies
\[
        d^+(v)\notin [1,k+\lfloor \frac{3r}{2}\rfloor-2].
\]
When \(r=3\), this conclusion follows from \(F_1\) and \(F_2\), since
\(r-1=\lfloor \frac{3r}{2}\rfloor-2\).

Since
\( d^+(v)\leq M=k+\lfloor \frac{5r}{2}\rfloor-3,
\)
every vertex in \(U^i\) has out-degree in
\(\{0\}\cup[k+\lfloor \frac{3r}{2}\rfloor-1,k+\lfloor \frac{5r}{2}\rfloor-3]\), which has exactly \(r\) values.
Now consider the complete \(r\)-partite graph \(G[U^*]\) with
parts
\(U^1,U^2,\ldots,U^r.
\)  It is easy to see that 
 $\{d^+(u_1),\ldots,d^+(u_r)\}=\{0\}\cup[k+\lfloor \frac{3r}{2}\rfloor-1,k+\lfloor \frac{5r}{2}\rfloor-3]$ for any $u_i\in U^i$.
Consequently,
\[
\binom r2 \lfloor \frac{2k}{r-1}\rfloor^2=e(G[U^*])\le \sum_{v\in U^*}d^+(v) =
\lfloor \frac{2k}{r-1}\rfloor\sum_{j=1}^{r-1}(k+\lfloor \frac{3r}{2}\rfloor-2+j)\]

\[=\lfloor \frac{2k}{r-1}\rfloor(r-1)\left(k+\left\lfloor \frac{3r}{2}\right\rfloor+\frac r2-2\right).
\]
Since $k>\frac{5r^2-10r+4}{2}$, a direct calculation shows  the above inequality is false. Hence
\(\vec{\chi}(G)\geq M+1
=k+\left\lfloor\frac{5r}{2}\right\rfloor-2.
\)
\end{proof}

\section{Proof of Theorem \ref{t1}}

Before proving it, we introduce two lemmas  as below.

\begin{lem}
  \label{l1}  (Hakimi \cite{H}) A graph $G$ admits an orientation such that the maximum out-degree is at most $k$ if and only if $\text{Mad}(G)  \leq 2k$.
\end{lem}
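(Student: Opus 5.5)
The necessity direction is the averaging argument already used in the introduction. Suppose $G$ has an orientation $D$ with $\Delta^+(D)\le k$, and let $H\subseteq G$ be any subgraph. Every edge of $H$ is counted exactly once as an out-edge of one of its endpoints in $V(H)$. Hence $|E(H)|\le k|V(H)|$, so $2|E(H)|/|V(H)|\le 2k$. Taking the maximum over $H$ gives $\text{Mad}(G)\le 2k$.

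For sufficiency, assume $\text{Mad}(G)\le 2k$, i.e.\ $|E(H)|\le k|V(H)|$ for every subgraph $H$, and in particular for every induced subgraph $G[S]$. I would phrase the existence of the orientation as a bipartite matching problem. Build an auxiliary bipartite graph $B$ with one side $E(G)$. The other side consists of $k$ copies $v^{(1)},\dots,v^{(k)}$ of each vertex $v\in V(G)$. Join an edge $e=uv$ to all copies of $u$ and all copies of $v$. An orientation with $\Delta^+\le k$ corresponds exactly to a matching of $B$ saturating $E(G)$: orient $e$ out of the vertex whose copy it is matched to. Each vertex has only $k$ copies, so its out-degree is at most $k$.

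It therefore suffices to verify Hall's condition for the side $E(G)$. Take any $F\subseteq E(G)$ and let $S=V(F)$ be the set of endpoints of edges in $F$. The neighbourhood of $F$ in $B$ is exactly the set of copies of vertices of $S$, which has size $k|S|$. Since $F\subseteq E(G[S])$, the hypothesis gives $|F|\le |E(G[S])|\le k|S|=|N_B(F)|$. Hall's theorem then yields the saturating matching, and hence the orientation.

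The only real step is recognizing the Hall reduction. The Hall check itself is immediate from the Mad hypothesis applied to $G[S]$, so there is no substantial obstacle. An alternative route is a max-flow/augmenting-path argument: start from any orientation and reverse directed paths from a vertex with out-degree greater than $k$. If the path-reversal step gets stuck, the set reachable from that vertex would span more than $k$ edges per vertex, contradicting the hypothesis. I would present the Hall version because it is shorter.
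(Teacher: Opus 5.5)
Your proof is correct in both directions. The paper gives no proof of this lemma to compare against: it quotes it from Hakimi and uses the sufficiency direction as a black box, to obtain the initial orientation $D_0$ with $d^+_{D_0}(v)\le k$ at the start of Section 3.

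\emph{Necessity.} Your argument is exactly the counting remark in the introduction, namely $|E(H)|\le \Delta^+(D)|V(H)|$ for every subgraph $H$. The same counting drives the proof of Lemma~\ref{lemma:mad-charging}.

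\emph{Sufficiency.} Replacing each vertex by $k$ copies is the standard way to turn Hall's theorem into a capacitated matching statement. Inside this paper you could skip the copies and apply Lemma~\ref{l2} directly to the vertex--edge incidence graph. Take $U=E(G)$ and $V=V(G)$, with $w(e)=1$ and $w(v)=k$; this needs $k\ge 1$, and $k=0$ is trivial because the graph is then edgeless. For $F\subseteq E(G)$, Ore's condition reads $|F|\le k|V(F)|$, which is your Hall check. The resulting subgraph, with $d_M(e)=1$ and $d_M(v)\le k$, is the orientation.

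Your argument also gives vertex-dependent bounds with no extra work. Using $f(v)$ copies of $v$ instead of $k$ shows that an orientation with $d^+(v)\le f(v)$ exists whenever $|E(G[S])|\le \sum_{v\in S} f(v)$ for all $S\subseteq V(G)$.

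The path-reversal alternative you sketch is the flow version of the same argument. It is longer to write out, but it yields an algorithm directly.
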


\begin{lem}
    \label{l2}(Ore \cite{Ore}, \cite{Ore1957}) Let $G=(U\cup V, E)$ be a bipartite graph and  $w: U\cup V \rightarrow \mathbb{N}_{0}$ be a weight function on  the vertices of $G$ such that  $w(u)=1$ for any $u\in U$ and $w(v)\geq 1$ for any $v\in V$.  If
      $\sum_{u\in S}w(u)\leq w(N(S))$ for any $S\subseteq U$,
then there is a subgraph $M$ of $G$ such that  $d_M(u)=1$ for  each $u\in U$, and $d_M(v)\leq w(v)$ for each $v\in V$.
\end{lem}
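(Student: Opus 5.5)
This is a weighted version of Hall's theorem, and I would reduce it to Hall's theorem itself by splitting each $v\in V$ into $w(v)$ copies. Concretely, build an auxiliary bipartite graph $G'$ with parts $U$ and $V'$. Here $V'$ consists of copies $v^{(1)},\dots,v^{(w(v))}$ for each $v\in V$. Each copy $v^{(s)}$ is adjacent to exactly the vertices $u\in U$ with $uv\in E$. Every $v$ has $w(v)\geq 1$, so no vertex of $V$ disappears, though this is not even needed for the argument.

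Next I would verify Hall's condition for $U$ in $G'$. Take any $S\subseteq U$. Its neighbourhood in $G'$ is the set of all copies of vertices in $N_G(S)$, so $|N_{G'}(S)|=\sum_{v\in N_G(S)}w(v)=w(N(S))$. Because $w(u)=1$ on $U$, the hypothesis says exactly $|S|=\sum_{u\in S}w(u)\leq w(N(S))=|N_{G'}(S)|$. Hall's theorem then gives a matching $M'$ in $G'$ saturating $U$.

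Finally I would project $M'$ back to $G$. Replace each matching edge $uv^{(s)}$ by $uv$, and let $M$ be the resulting edge set. Distinct $u$'s give distinct edges, and since $M'$ is a matching, each $u$ lies in exactly one edge of $M'$, so $d_M(u)=1$. Each $v$ is hit at most once per copy, so $d_M(v)\leq w(v)$. Two matching edges $uv^{(s)}$, $uv^{(t)}$ with the same $u$ cannot both occur, so $M$ is a simple subgraph.

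I expect no real obstacle here. The only point that needs care is the neighbourhood count in $G'$, together with checking that the projection keeps the degree bounds. Alternatively, one could give a max-flow/min-cut argument with capacities $w(v)$ on the sink arcs, but the vertex-splitting reduction to Hall's theorem is the cleanest route.
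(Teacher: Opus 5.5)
Your proof is correct. Splitting each $v\in V$ into $w(v)$ copies turns the hypothesis into Hall's condition $|S|\le |N_{G'}(S)|$. Projecting a $U$-saturating matching back to $G$ then gives exactly the required subgraph, and you correctly check that no edge $uv$ is produced twice.

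The paper does not prove this lemma; it only cites Ore. So there is no competing route to compare, and your vertex-splitting reduction (or the flow argument you mention) is the standard way to make the statement self-contained.

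Your side remark that $w(v)\geq 1$ is never used is worth keeping explicitly. In Step 2 of the proof of Theorem~\ref{t1}, the lemma is applied with $V=A_{k+4}\cap V_1$ and $w(v)=d_p(v)-k-4$. This weight is $0$ for any vertex of $V_1$ with $d_p(v)=k+4$, and such vertices may well lie in $A_{k+4}$. So the lemma's stated hypothesis can fail there. Your argument covers that case directly: a weight-$0$ vertex simply gets no copies. Equivalently, it can be deleted from $V$ without changing any sum $w(N(S))$.
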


Now let $G$ be a 3-partite graph with partition $V_1,V_2,V_3$, and each $V_i$ is independent. 
Let $\left\lceil \frac{\text{Mad}(G)}{2} \right\rceil = k $.
Let $D_0$ be an orientation given by Lemma \ref{l1}. Then $d_{D_0}^+(v)\leq k$ for any $v\in V(D_0)$.    

We first define some  notions.
We say $v$ is an \emph{oriented} vertex if every edge incident with $v$ has been oriented. Otherwise, $v$ is \emph{unoriented}.
The \emph{potential out-degree} of a vertex \(v\), denoted by \(d_p(v)\),
is the number of unoriented edges incident with \(v\) plus the number of
out-edges incident with \(v\).  Let  $d_p^+(v)$ be the sum of the number of out-edges incident with $v$. Then $d_p^+(v)\leq d_p(v)\leq d(v)$.
If $v$ is an oriented vertex, then $d_p(v)=d_p^+(v)$. 

We will construct a proper orientation step by step.  
 We first construct some independent sets $A_{k+i}$ for $i\in \{2,3,4,5,6,7\}$ and the orientations of edges incident exactly with vertices of $A_{k+i}$ such that every $v\in A_{k+i}$ becomes oriented and $d_p^+(v)=k+i$.
Note that for any $v\in V(G)$,  $d_p(v)=d(v)$ Initially, and $d_p(v)$ will decrease as we give the orientations step by step.

\vskip 1mm

\noindent{\bf{Step 1.}}
We carry out the following for $i=7,6,5$ one by one:
Let $U_{k+i}:=\{v\in V(G)\backslash (\cup_{j=i+1}^{7} A_{k+j}):d_p(v)\geq k+i\}$ (If $i=7$, then $\cup_{j=i+1}^{7} A_{k+j}=\emptyset$). We choose an independent set $A_{k+i}\subseteq U_{k+i}$ such that 
$A_{k+i} \cap  V_{8-i}$ is as large as possible. Subject to this,  $A_{k+i}$ is  as large as possible. 
Then $d_p(v)\leq k+i-1$ for any $v\in V_{8-i}\backslash (\cup_{j=i}^7A_{k+j})$. Note that for any $v\in A_{k+i}$, all the  out-edges incident with $v$ (if exist) are also
the arcs in $D_0$.

We orient the unoriented edges incident with  $v\in A_{k+i}$ out of $v$ if it is also out of $v$ in $D_0$. 
Since $v\in A_{k+i}$ has at most $k$ out-edges in $D_0$,  $d_p^+(v)\leq k$.
Then we  orient the remaining unoriented edges incident with  $v\in A_{k+i}$ such that $v$ becomes oriented and  $d_p^+(v)=k+i$. 
Let $uv$ be an edge such that 
 $u\notin \cup_{j=i}^7A_{k+j}$, $v\in \cup_{j=i}^7A_{k+j}$ and $uv$ has been oriented from $u$ to $v$. Since $(u,v)$ is an in-edge of $v$, by the above orientations, $(u,v)$ must be an arc in $D_0$. Then
all the out-edges incident with $u\notin \cup_{j=i}^7A_{k+j}$ are also the arcs in $D_0$. 
And $d_p^+(u)\leq k$ for any $u\notin \cup_{j=i}^7A_{k+j}$. We return to carry out the next $i$ until $i=5$.

By Step 1, we can conclude the followings.

(1.1) All the vertices in $\cup_{i=5}^7A_{k+i}$ have been oriented;
 
 (1.2)  $d_p^+(v)=k+i$ for any $v\in A_{k+i}$ and $i\in \{5,6,7\}$;

 (1.3) $d_p(v)\leq k+7-j$ for any $v\in V_j\backslash (\cup_{i=5}^7A_{k+i})$ and $j\in \{1,2,3\}$;

 (1.4) All the  out-edges incident with $v\notin \cup_{i=5}^7A_{k+i}$ are also the arcs in $D_0$;
 
 (1.5) $d_p^+(v)\leq k$ for any $v\notin \cup_{i=5}^7A_{k+i}$;

 (1.6) $uv$ has not been oriented for any $uv\in E(G)$ with  $u,v\notin \cup_{i=5}^7A_{k+i}$.

 \vskip 1mm
The next two steps are quite different and need to orient the edges   carefully.

\noindent{\bf{Step 2.}} Let $U_{k+4}$ be a vertex set  containing   unoriented vertices in $V_1$ with potential out-degree at least $k+4$, and  vertices in $V_2\cup V_3$ with potential out-degree equal to $k+4$. 
We define a weight function $w$ on $U_{k+4}$:
 \vskip 2mm
      \hspace{10mm}$w(v)=
\begin{cases}
d_p(v)-k-4 & v\in V_1\cap U_{k+4} \\
 1 & v\in (V_2\cup V_3)\cap U_{k+4}\\
\end{cases}$
  \vskip 2mm

Now we choose an independent set $A_{k+4}\subseteq U_{k+4}$ with maximum sum of weights. Subject to this,
 $A_{k+4}\cap V_1$ is as large as possible. Subject to these two conditions,
 $A_{k+4}$ is as large as possible. Let $X_{k+4}=U_{k+4} \backslash A_{k+4}$. Then  $d_p(v)\leq k+3$ for any $v\in V_1\backslash (\cup_{i=4}^7 A_{k+i}\cup X_{k+4})$.
For any $v\in X_{k+4}\cap V_1$,   by our choice of $A_{k+4}$, $|N(v)\cap A_{k+4}\cap (V_2\cup V_3)|\geq w(v)+1$.

(O.2.1) For any $v\in A_{k+4}\cap (V_2\cup V_3)$,   we orient all unoriented edges incident with $v$ out of $v$.

Then all vertices in $A_{k+4}\cap (V_2\cup V_3)$ have been oriented. For any $v\in A_{k+4}\cap (V_2\cup V_3)$, since $d_p(v)=k+4$,  $d_p^+(v)=k+4$. 
 Since $|N(v)\cap A_{k+4}\cap (V_2\cup V_3)|\geq w(v)+1$ for any $v\in X_{k+4}\cap V_1$,
 $d_p(v)$ becomes at most $ k+3$. Hence,  $d_p(v)\leq k+3$ for any $v\in  V_1\backslash (\cup_{i=4}^7A_{k+i})$.

 Let $X_{k+4}'$ be the set of vertices in $X_{k+4}\cap   V_3$ whose potential out-degree is still equal  $k+4$ after the  orientation (O.2.1). Observe that no vertex in $X_{k+4}'$ has a neighbor in $A_{k+4}\cap V_2$; otherwise, after the orientation (O.2.1), its potential out-degree would decrease to at most $k+3$, contradicting the definition of $X_{k+4}'$. By our choice of $A_{k+4}$, for any     $S\subseteq X_{k+4}'$,  
       $w(S)\leq w(N(S)\cap A_{k+4}\cap V_1)$.
Applying  Lemma \ref{l2} on $(U,V)=(X_{k+4}',A_{k+4}\cap V_1)$, there exists a subgraph $M\ss G[X_{k+4}', A_{k+4}\cap V_1]$ such that    $d_{M}(u)=1$ for  each $u\in X_{k+4}'$, and   $d_{M}(v)\leq w(v)\leq 2$ for each $v\in A_{k+4}\cap V_1$. 

(O.2.2) We orient the edges in $M$ from $A_{k+4}\cap V_1$ to $X_{k+4}'$. Orient the unoriented edges incident with $v\in A_{k+4}\cap V_1$ out of $v$ which are also out of $v$ in $D_0$. 

After the orientation (O.2.2),  $d_p(v)\leq k+3$ for any $v\in  V_3\backslash (\cup_{i=4}^7A_{k+i})$.
By (1.4) and the orientation (O.2.2), we can check that  $d_p^+(v)\leq k+2< d_p(v)$ for each $v\in A_{k+4}\cap V_1$.

(O.2.3) We  orient the unoriented edges incident with $v\in A_{k+4}\cap V_1$ such that $v$ becomes oriented and $d_p^+(v)= k+4$. 

By the above orientations, we can conclude the followings. 

(2.1) All the vertices in $\cup_{i=4}^7A_{k+i}$ have been oriented;
 
 (2.2)  $d_p^+(v)=k+i$ for any $v\in A_{k+i}$ and $i\in \{4,5,6,7\}$;

 (2.3) $d_p(v)\leq k+3$ for any $v\in V_1\cup V_3\backslash (\cup_{i=4}^7A_{k+i})$ and $d_p(v)\leq k+5$ for any $v\in V_2\backslash (\cup_{i=4}^7A_{k+i})$;

 (2.4) All the  out-edges incident with $v\notin \cup_{i=4}^7A_{k+i}$ are also the arcs in $D_0$;

 (2.5) $d_p^+(v)\leq k$ for any $v\notin \cup_{i=4}^7A_{k+i}$;

 (2.6) $uv$ has not been oriented for any $uv\in E(G)$ with  $u,v\notin \cup_{i=4}^7A_{k+i}$.

 \vskip 1mm

\noindent{\bf{Step 3.}}  We carry out the following orientations for $i=3,2$ one by one.
 Let $U_{k+i}:=\{v\in V(G)\backslash\cup_{j=i+1}^{7}A_{k+j} :d_p(v)\geq k+i\}$. We choose an independent set $A_{k+i}\subseteq U_{k+i}$ such that  $A_{k+i}$ is as large as possible. Subject to this, $A_{k+i}\cap V_2$  is as large as possible. Let $X_{k+i}=U_{k+i}\backslash A_{k+i}$. 
Then $d_p(v)\leq k+i-1$ for any $v\in V_2\backslash (\cup_{j=i}^7 A_{k+j}\cup X_{k+i})$.
 For any $v\in X_{k+i}\cap V_2$,   by our choice of $A_{k+i}$, $|N(v)\cap A_{k+i}\cap (V_1\cup V_3)|\geq 2$. 
 
 (O.3.1)
For any $v\in A_{k+i}\cap (V_1\cup V_3)$,   we orient all unoriented edges incident with $v$ out of $v$. 

Then all vertices in $A_{k+i}\cap (V_1\cup V_3)$ have been oriented.  Since $d_p(v)=k+i$ for any $v\in A_{k+i}\cap (V_1\cup V_3)$, $d_p^+(v)=k+i$. Since $|N(v)\cap A_{k+i}\cap (V_1\cup V_3)|\geq 2$
for any $v\in X_{k+i}\cap V_2$, by (O.3.1),
$d_p(v)$ becomes at most $ k+2i-3$.
Then $d_p(v)$ becomes at most $ k+2i-3$ for any $v\in V_2\backslash \cup_{j=i}^7 A_{k+j}$.

 For any $j\in \{1,3\}$, let $X_{k+i,j}$ be the set of vertices in $X_{k+i}\cap   V_j$ whose potential out-degree is still equal $k+i$ after the  orientation (O.3.1). Observe that any vertex in $X_{k+i,j}$ has no neighbor in $A_{k+i}\cap V_{4-j}$ (otherwise, its potential out-degree decreases to at most $k+i-1$). By our choice of $A_{k+i}$,  for any     $S\subseteq X_{k+i,j}$, we have
       $|S|\leq |N(S)\cap A_{k+i}\cap V_2|$.
Applying Lemma \ref{l2} to $(U,V)=(X_{k+i,j},A_{k+i}\cap V_2)$ and $w(v)=1$ for any $v\in U\cup V$, there exists a matching $M_j\ss G[X_{k+i,j}, A_{k+i}\cap V_2]$ such that    $d_{M_j}(u)=1$ for  each $u\in X_{k+i,j}$, and   $d_{M_j}(v)\leq 1$ for each $v\in A_{k+i}\cap V_2$.
 
 (O.3.2) For any $j\in \{1,3\}$, we orient the edges in $M_j$ from $A_{k+i}\cap V_2$ to $X_{k+i,j}$. Orient the unoriented edges incident with $v\in A_{k+i}\cap V_2$ out of $v$ which are also out of $v$ in $D_0$. 
 
 Then $d_p(v)\leq k+i-1$ for any $v\in V_1\cup V_3\backslash(\cup_{j=i}^7A_{k+j})$.
 By (2.4) and the orientation (O.3.2),  we can check that  $d_p^+(v)\leq k+2\leq  d_p(v)$ for each $v\in A_{k+i}\cap V_2$. 
 
 (O.3.3) We  orient the unoriented edges incident with $v\in A_{k+i}\cap V_2$ such that $v$ becomes oriented and $d_p^+(v)= k+i$.

By the above orientations, we can conclude the followings.

 (3.1) All the vertices in $\cup_{j=i}^7A_{k+j}$ have been oriented;
 
 (3.2)  $d_p^+(v)=k+j$ for any $v\in A_{k+j}$ and $j\in \{i,i+1,\ldots,7\}$;

 (3.3) $d_p(v)\leq k+i-1$ for any $v\notin  \cup_{j=i}^7A_{k+j}$, except that when $i=3$ and $v\in V_2\backslash (\cup_{j=3}^7A_{k+j})$, $d_p(v)\leq k+3$;
 
  (3.4) All the  out-edges incident with $v\notin \cup_{j=i}^7A_{k+j}$ are also the arcs in $D_0$;

 (3.5) $d_p^+(v)\leq k$ for any $v\notin \cup_{j=i}^7A_{k+j}$;

(3.6) Let $uv$ be an edge with $u,v\notin \cup_{j=i}^7A_{k+j}$.
Then $uv$ has not been oriented.

 We return to carry out the above arguments for the case when $i=2$ in this step.
 
\vskip 1mm

\vskip 1mm

By (3.1)-(3.6), we are ready to  orient all remaining unoriented edges as below.
In each step, we choose an unoriented vertex $v$ with the maximum $d_p(v)$ and then orient all the edges incident with $v$ out of $v$. 
By all the orientations above, we can check that we obtain a proper orientation as required.
  \bqed

\vskip 5mm

\noindent{\bf\large Acknowledgements}
\vskip 2mm

The authors sincerely thank Xiaolan Hu from Central China Normal University.  This research is supported by 
 National Key R\&D Program of China under grant number 2023YFA1010202 and NSFC under grant number 12401447.
   Lanchao Wang is supported by the Institute for Basic Science (IBS-R029-C4) and  by National Key R\&D Program of China under grant number 2024YFA1013900, NSFC under grant number 12471327 and by China Scholarship Council.

\end{document}